\documentclass[a4paper]{amsart}
\usepackage{amsmath,amsthm,amssymb,latexsym,epic,bbm,comment}
\usepackage{graphicx,enumerate,stmaryrd}
\usepackage[all,2cell]{xy}
\xyoption{2cell}

\usepackage[active]{srcltx}
\usepackage[parfill]{parskip}
\UseTwocells

\newtheorem{theorem}{Theorem}
\newtheorem{lemma}[theorem]{Lemma}

\newtheorem{corollary}[theorem]{Corollary}
\newtheorem{proposition}[theorem]{Proposition}

\newcommand{\tto}{\twoheadrightarrow}

\font\sc=rsfs10
\newcommand{\cC}{\sc\mbox{C}\hspace{1.0pt}}

\font\scc=rsfs7
\newcommand{\ccC}{\scc\mbox{C}\hspace{1.0pt}}

\sloppy

\setlength{\parindent}{0pt}
\setlength{\parskip}{10pt}

\begin{document}

\title[Special modules over positively based algebras]
{Special modules over positively based algebras}
\author{Tobias Kildetoft and Volodymyr Mazorchuk}
%\date{\today}

\begin{abstract}
We use the Perron-Frobenius Theorem to define, study 
and, in some sense, classify special simple modules 
over arbitrary finite dimensional positively based algebras. For group
algebras of finite Weyl groups with respect to the Kazhdan-Lusztig basis,
this agrees with Lusztig's notion of a special module introduced in \cite{Lu1}.
\end{abstract}

\maketitle

\section{Introduction and description of the results}\label{s1}

In \cite{Lu1,Lu2}, Lusztig used combinatorics of generic degrees to define and study 
a certain class of  Weyl group representations which he called {\em special}. These
representations play an important role in the study of Kazhdan-Lusztig left cell representations,
see \cite{KL,Lu3,Ge1}.

The present paper proposes an approach to the definition and study of special modules
for arbitrary finite dimensional positively based algebras. By the latter we mean 
an algebra over a subfield $\Bbbk$ of the complex numbers with a fixed basis such that 
all structure constants with respect to this basis are non-negative real numbers.
Examples of such algebras include group algebras and semigroup algebras with the standard
basis, but also group algebras of Coxeter groups and the corresponding Hecke algebras
with respect to the Kazhdan-Lusztig basis.

Our approach is motivated by some techniques originating in the abstract $2$-representation 
theory of finitary $2$-categories developed in the series \cite{MM1,MM2,MM3,MM4,MM5,MM6}
of papers. A major emphasis in these papers was made on the study of so-called 
{\em cell $2$-representations}. On the level of the Grothendieck group, a cell 
$2$-representation becomes a based module over some finite-dimensional positively based algebra
with various nice properties. For example, for the $2$-category of Soergel bimodules
over the coinvariant algebra of a finite Coxeter group, the Grothendieck group 
level of a cell $2$-representation is exactly the Kazhdan-Lusztig left cell module.
In this sense, abstract representation theory of finitary $2$-categories 
proposes a generalization of the situation mentioned in the previous paragraph.

A crucial technical tool in this study of cell $2$-representations turned
out to be the classical Perron-Frobenius Theorem from \cite{Fr1,Fr2,Pe}, 
see for example applications of this theorem in \cite{MM4,MM5,MM6}.
This theorem also plays a very important role in some further developments,
see for example \cite{CM,MZ,Zi}. The main point of the present paper is the observation that
one can use  the Perron-Frobenius Theorem to define special modules for arbitrary 
transitive $2$-representations of finitary $2$-categories. In fact, the definition does
not require any properties of the $2$-layer of the structure and hence can be 
formulated for the general setup of positively based algebras.

Given an algebra $A$ with a positive basis $\mathbf{B}$, one can define the notions
of {\em left, right and two-sided} orders and cells, similarly to the definition of
Green's orders and relations for semigroups (see \cite{Gr}) or multisemigroups
(see \cite{KuMa}), or Kazhdan-Lusztig orders and cells in Kazhdan-Lusztig theory (see \cite{KL}). 
This can be used to define left cell modules for $A$. Such a module, denoted
$C_{\mathcal{L}}$, where $\mathcal{L}$ is a left cell, is a based module with a fixed
basis $\mathbf{B}_{\mathcal{L}}$ that can be canonically identified with 
a subset of $\mathbf{B}$. Now, for any element $a\in A$ which can be
written as a linear combination of all elements in $\mathbf{B}$ with positive real coefficients,
all entries of the matrix of $a$ in the basis $\mathbf{B}_{\mathcal{L}}$ are positive real 
numbers. This allows us to use the Perron-Frobenius Theorem, namely, uniqueness and simplicity of 
the Perron-Frobenius eigenvalue for $a$, to define the {\em special} subquotient
of $C_{\mathcal{L}}$, which is a certain simple module that appears in $C_{\mathcal{L}}$ with
multiplicity one. The original definition depends both on the choice of $a$ and
$\mathcal{L}$. However, in Subsection~\ref{s5.4} we show that the resulting special
module is independent of the choice of $a$. Further, in Subsection~\ref{s5.5} we show
that it is also independent of the choice of $\mathcal{L}$ inside a fixed two-sided cell.
We give a complete classification of special modules in Corollary~\ref{corclass} by showing
that there is a one-to-one correspondence between special modules and idempotent 
two-sided cells.

The paper is organized as follows: In Section~\ref{s2} we give the definition of
positively based algebras and list several classical examples. In Section~\ref{s3} we
describe basic properties and combinatorics for positively based algebras. 
In Section~\ref{s4} we recall the Perron-Frobenius Theorem. In Section~\ref{s5}
we introduce the notion of special modules and study  basic properties of such modules, 
in particular the independence properties mentioned above. In Section~\ref{s6} we describe 
special modules for our three principal examples: group algebras (in the standard basis), 
semigroup algebras, and group algebras of finite Weyl groups with respect to the Kazhdan-Lusztig
basis. In particular, we show that, in the latter case, our notion of a special module
coincides with Lusztig's definition of special modules from \cite{Lu1}. 
In Section~\ref{s7} we obtain some further properties of special modules.
In Section~\ref{s8} we define and study the notion of the apex.
Finally, in Section~\ref{s9}, we consider special subquotients for arbitrary transitive 
$A$-modules and give a complete classification of special subquotients in terms of 
idempotent two-sided cells of $A$. As an application, we obtain an elementary explanation for the
fact that different left Kazhdan-Lusztig cells inside a given two-sided Kazhdan-Lusztig cell
are not comparable with respect to the Kazhdan-Lusztig left order. As another application,
we show that all Kazhdan-Lusztig two-sided cells are good in the sense of \cite{CM}.
\vspace{5mm}

\noindent
{\bf Acknowledgment.} This research was done during the postdoctoral stay of the first author
at Uppsala University which was supported by the Knut and Alice Wallenbergs Stiftelse.
The second author is partially supported by the Swedish Research Council.
We thank Meinolf Geck for very helpful discussions.
\vspace{5mm}

\section{Positively based algebras: definition and examples}\label{s2}

\subsection{Algebras with a positive basis}\label{s2.1}

Let $\Bbbk$ be a unital subring of the field $\mathbb{C}$ of complex numbers.
Let $A$ be a $\Bbbk$-algebra which is free of finite rank $n$ over $\Bbbk$. A $\Bbbk$-basis
$\mathbf{B}=\{a_i\,:\,i=1,2,\dots,n\}$ of $A$ will be called {\em positive} provided that 
all structure constants of $A$ with respect to this basis are non-negative real numbers, that is,
for all $i,j\in\{1,2,\dots,n\}$, we have
\begin{equation}\label{eq1}
a_i\cdot a_j =\sum_{k=1}^n \gamma_{i,j}^{(k)} a_k,\quad\text{ where }\quad
\gamma_{i,j}^{(k)}\in\mathbb{R}_{\geq 0}\quad\text{ for all }\quad i,j,k. 
\end{equation}
An algebra with a fixed positive basis is called a {\em positively based algebra}.

The above notion also makes perfect sense for infinite dimensional algebras. However, 
in this paper we restrict our study to algebras which are finitely generated over the base ring.
For interesting infinite dimensional examples, see \cite{Th}.

\subsection{Example I: group algebras}\label{s2.2}

Let $G$ be a finite group and $\Bbbk[G]$ the corresponding group algebra which consists of 
all elements of the form $\displaystyle\sum_{g\in G}c_g g$, where $c_g\in\Bbbk$. This algebra
is positively based with
respect to the {\em standard basis} $\mathbf{B}=\{g:g\in G\}$. In fact, all structure constants
with respect to this basis are either zero or one.

\subsection{Example II: semigroup algebras}\label{s2.3}

A straightforward generalization of the previous example is the following.
Let $S$ be a finite monoid and $\Bbbk[S]$ the corresponding semigroup algebra which consists of 
all elements of the form $\displaystyle\sum_{s\in S}c_s s$, where $c_s\in\Bbbk$. This algebra
is positively based with
respect to the {\em standard basis} $\mathbf{B}=\{s:s\in S\}$. In fact, all structure constants
with respect to this basis are either zero or one.

\subsection{Example III: Hecke algebras}\label{s2.4}

Let $(W,S)$ be a finite Coxeter system and $\mathcal{H}_v$ the corresponding Hecke algebra over
$\mathbb{Z}[v,v^{-1}]$, in the normalization of \cite{So}. Specializing $v$ to 
\begin{displaymath}
z\in \mathbb{R}_{>0}\bigcup \{u\in\mathbb{C}\,:\,|u|=1\text{ and }\Re(u)>0 \}, 
\end{displaymath}
we get the algebra $\mathcal{H}_z$ defined over the subring of $\mathbb{C}$ generated by $\mathbb{Z}$, 
$z$ and $z^{-1}$. Under our assumption on $z$, we have $z+z^{-1}\in\mathbb{R}_{>0}$. This implies that
the algebra $\mathcal{H}_z$ is positively based with respect to the 
{\em Kazhdan-Lusztig basis}  $\{\underline{H}_w\,:\, w\in W\}$, 
as defined in \cite{KL,So}, see also \cite{EW}. A special case of this construction is the
group algebra $\mathbb{Z}[W]$ of the Coxeter group $W$ (which corresponds to the case $z=1$).

\subsection{Example IV: decategorifications of finitary $2$-categories}\label{s2.5}

The previous example is a special case of the following abstract situation. Let $\cC$ be a finitary 
$2$-category in the sense of \cite{MM1}. Consider its decategorification $[\cC]$ defined via
split Grothendieck groups of the morphism categories, see \cite[Subsection~2.4]{MM2}. Let $A_{\ccC}$ be the
$\mathbb{Z}$-algebra of paths in the category $[\cC]$, defined as
\begin{displaymath}
A_{\ccC}:=\bigoplus_{\mathtt{i},\mathtt{j}\in\ccC}[\cC](\mathtt{i},\mathtt{j}), 
\end{displaymath}
with multiplication naturally induced from composition in $[\cC]$. Then $A_{\ccC}$ is positively based with 
respect to the basis given by isomorphism classes of indecomposable $1$-morphisms in $\cC$.

The example in Subsection~\ref{s2.4} is obtained as a special case if one considers the finitary $2$-category of
{\em Soergel bimodules} (over the coinvariant algebra of $W$) associated to $(W,S)$, see 
\cite[Example~3]{MM2}, \cite[Subsection~6.4]{MM4} and \cite[Subsection~7.3]{MM5}, see also \cite{EW}.

\subsection{Positively based algebras and multistructures}\label{s2.6}

Consider the semiring $(\mathbb{Z}_{>0},+,\cdot,0,1)$ of non-negative integers with respect to 
the usual addition and multiplication. For a positive integer $n$, consider the free module 
$\mathbb{Z}_{>0}^n$ over $\mathbb{Z}_{>0}$ of rank $n$. An {\em $\mathbb{Z}_{>0}$-algebra structure} on $\mathbb{Z}_{>0}^n$
is a map
\begin{displaymath}
*:\mathbb{Z}_{>0}^n\times \mathbb{Z}_{>0}^n\to \mathbb{Z}_{>0}^n
\end{displaymath}
which is bilinear and associative in the usual sense. Defining a $\mathbb{Z}_{>0}$-algebra structure on
$\mathbb{Z}_{>0}^n$ is equivalent to defining, on the standard basis of $\mathbb{Z}_{>0}^n$,
the structure of a multisemigroup with multiplicities in $\mathbb{Z}_{>0}$,
see \cite{Fo} for details. Extending scalars to $\Bbbk$ we get a positively based algebra with the canonical 
positive basis being the standard basis of $\mathbb{Z}_{>0}^n$.

Conversely, if $A$ is a finite dimensional $\Bbbk$-algebra with a fixed positive basis  $\mathbf{B}$
with respect to which all structure constants are integers,
then the $\mathbb{Z}_{>0}$-linear span of $\mathbf{B}$ is a free $\mathbb{Z}_{>0}$-module of finite rank
with the canonical $\mathbb{Z}_{>0}$-algebra structure induced from multiplication in $A$. Extending 
scalars back to $\Bbbk$ recovers $A$.

\section{Positively based algebras: combinatorics and cell modules}\label{s3}

\subsection{The multisemigroup of $A$}\label{s3.1}

Let $A$ be a positively based algebra with a fixed positive basis 
$\mathbf{B}=\{a_i\,:\,i=1,2,\dots,n\}$ as defined in Subsection~\ref{s2.1}.
For simplicity, we will always assume that $a_1$ is the unit element in $A$.
For $i,j\in\{1,2,\dots,n\}$, set
\begin{displaymath}
i\star j:=\{k\,:\, \gamma_{i,j}^{(k)}>0\}. 
\end{displaymath}
This defines an associative multivalued operation on the set $\mathbf{n}:=\{1,2,\dots,n\}$
and thus turns the latter set into a finite {\em multisemigroup}, see \cite[Subsection~3.7]{KuMa}.

\subsection{Cells in $(\mathbf{n},\star)$}\label{s3.2}

For $i,j\in \mathbf{n}$, we set $i\leq_L j$ provided that there is an $s\in \mathbf{n}$ such that 
$j\in s\star i$. Then $\leq_L$ is a partial preorder on $\mathbf{n}$, called the {\em left preorder}.
Write $i\sim_L j$ provided that $i\leq_L j$ and $j\leq_L i$. Then $\sim_L$ is an equivalence relation
on $\mathbf{n}$. Equivalence classes for $\sim_L$ are called {\em left cells}. The preorder
$\leq_L$ induces a genuine partial order on the set of all left cells in $\mathbf{n}$
(which we denote also by $\leq_L$, abusing notation).

Similarly one defines the {\em right preorder} $\leq_R$, the corresponding equivalence relation $\sim_R$
and {\em right cells}, using multiplication with $s$ on the right. Furthermore, one defines the 
{\em two-sided preorder} $\leq_J$, the corresponding equivalence relation $\sim_J$
and {\em two-sided cells}, using multiplication with $s_1$ on the left and $s_2$ on the right.
We write $i<_Lj$ provided that $i\leq_L j$ and $i\not\sim_L j$, and similarly for $i<_R j$ and $i<_J j$.

A two-sided cell $\mathcal{J}$ is said to be {\em idempotent} provided that there exist 
$i,j,k\in\mathcal{J}$ such that $k\in i\star j$.

\subsection{Cell modules}\label{s3.3}

Let $\mathcal{L}$ be a left cell in $\mathbf{n}$ and $\overline{\mathcal{L}}$ be the union of all left cells
$\mathcal{L}'$ in $\mathbf{n}$ such that $\mathcal{L}'\geq \mathcal{L}$. Set 
$\underline{\overline{\mathcal{L}}}:=\overline{\mathcal{L}}\setminus \mathcal{L}$.
Consider the regular $A$-module ${}_AA$ and the $\Bbbk$-submodule $M_{\mathcal{L}}$ of ${}_AA$
spanned by all $a_j$, where $j\in \overline{\mathcal{L}}$. Further, consider the 
$\Bbbk$-submodule $N_{\mathcal{L}}$ of $M_{\mathcal{L}}$
spanned by all $a_j$, where $j\in \underline{\overline{\mathcal{L}}}$.

\begin{proposition}\label{prop1}
Both $M_{\mathcal{L}}$ and $N_{\mathcal{L}}$ are $A$-submodules of ${}_AA$.
\end{proposition}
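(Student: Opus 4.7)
The plan is a direct unwinding of the definitions. To show $M_{\mathcal{L}}$ is a left submodule, I need only verify that $a_i \cdot a_j \in M_{\mathcal{L}}$ whenever $i \in \mathbf{n}$ and $j \in \overline{\mathcal{L}}$; since the structure constants $\gamma_{i,j}^{(k)}$ are non-negative reals, and in particular the positive ones detect exactly the elements of $i \star j$, it suffices to prove that every $k \in i \star j$ lies in $\overline{\mathcal{L}}$. The same reduction will handle $N_{\mathcal{L}}$ once we verify the corresponding statement for $j \in \underline{\overline{\mathcal{L}}}$.

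First, I would record the key observation linking the multisemigroup to the preorder: if $k \in i \star j$, then by the very definition of $\leq_L$ in Subsection~\ref{s3.2} (take $s = i$), we have $j \leq_L k$. This is the single combinatorial fact that drives everything.

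Now the claim for $M_{\mathcal{L}}$ is immediate: if $j \in \overline{\mathcal{L}}$, then the left cell $\mathcal{L}_j$ of $j$ satisfies $\mathcal{L}_j \geq_L \mathcal{L}$; together with $j \leq_L k$ we obtain, by transitivity of the preorder on cells, that the left cell of $k$ also dominates $\mathcal{L}$, i.e.\ $k \in \overline{\mathcal{L}}$. For $N_{\mathcal{L}}$, the only extra point is to rule out $k \in \mathcal{L}$. If $j \in \underline{\overline{\mathcal{L}}}$, then $\mathcal{L}_j >_L \mathcal{L}$. Since $\mathcal{L}_k \geq_L \mathcal{L}_j >_L \mathcal{L}$, transitivity gives $\mathcal{L}_k \geq_L \mathcal{L}$, and $\mathcal{L}_k = \mathcal{L}$ would force $\mathcal{L} \geq_L \mathcal{L}_j$, contradicting $\mathcal{L}_j \not\sim_L \mathcal{L}$. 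Hence $k \in \underline{\overline{\mathcal{L}}}$.

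There is no real obstacle here—the only thing to watch is that $\leq_L$ is a preorder on $\mathbf{n}$ but induces a genuine partial order on cells, and the strict inequality $>_L$ is preserved under composition with $\geq_L$ precisely because left cells are $\sim_L$-equivalence classes. The proof uses nothing about $A$ beyond positivity of the basis and the definitions of $\star$ and $\leq_L$.
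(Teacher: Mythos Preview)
Your proof is correct and follows essentially the same approach as the paper: both rest on the single observation that $k \in i\star j$ forces $j \leq_L k$, and then read off closure of $M_{\mathcal{L}}$ and $N_{\mathcal{L}}$ from transitivity of $\leq_L$. You simply spell out the $N_{\mathcal{L}}$ case explicitly (the paper says ``similarly''), which is fine.
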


\begin{proof}
We prove that  $M_{\mathcal{L}}$ is an $A$-submodule of ${}_AA$. That $N_{\mathcal{L}}$ is an $A$-submodule of ${}_AA$
is proved similarly. We need to check that $M_{\mathcal{L}}$ is closed with respect to the left multiplication 
with all $a_i$, where $i\in\mathbf{n}$. For any such $i$ and any $j\in \overline{\mathcal{L}}$, consider
the product $a_i\cdot a_j$ as given by \eqref{eq1}. Note that, for $k\in\mathbf{n}$, our definitions imply 
$\gamma_{i,j}^{(k)}\neq 0$ if and only if $k\geq_L j$. Therefore $a_i\cdot a_j$ is a $\Bbbk$-linear combination 
of the $a_k$'s, for $k\in \overline{\mathcal{L}}$. The claim follows.
\end{proof}

As $N_{\mathcal{L}}\subset M_{\mathcal{L}}$, Proposition~\ref{prop1} allows us to define the
{\em cell $A$-module} $C_{\mathcal{L}}$ associated to $\mathcal{L}$ as the quotient
$M_{\mathcal{L}}/N_{\mathcal{L}}$. Directly from the definitions we have that the regular representation
${}_AA$ has a filtration whose subquotients are isomorphic to cell modules, with each cell module 
occurring at least once, up to isomorphism. 

\subsection{Example I: group algebras}\label{s3.4}

For $A=\Bbbk[G]$, where $G$ is a finite group, with the standard positive basis as described in 
Subsection~\ref{s2.2}, we have the equalities 
$\leq_L=\leq_R=\leq_J=\sim_L=\sim_R=\sim_J=\mathbf{n}\times \mathbf{n}$.
In this case, for the unique left cell $\mathcal{L}=\mathbf{n}$, we have 
$C_{\mathcal{L}}={}_AA$.

\subsection{Example II: semigroup algebras}\label{s3.5}

For $A=\Bbbk[S]$, where $S$ is a finite monoid, with the standard positive basis as described in 
Subsection~\ref{s2.3}, the relations $\sim_L$, $\sim_R$ and $\sim_J$ are exactly the corresponding 
{\em  Green's equivalence relations} as defined in \cite{Gr}. The preorders
$\leq_L$, $\leq_R$, and $\leq_J$ are the corresponding preorders. For a left cell 
$\mathcal{L}$, the corresponding cell module $C_{\mathcal{L}}$ is the usual module
associated with $\mathcal{L}$, see, for example, \cite[Subsection~11.2]{GM}.

\subsection{Example III: Hecke algebras}\label{s3.6}

For $A=\mathcal{H}_v$, the Hecke algebra of a finite Coxeter system $(W,S)$ as in 
Subsection~\ref{s2.4}, with respect to the Kazhdan-Lusztig basis, 
the preorders $\leq_L$, $\leq_R$, and $\leq_J$ are exactly the Kazhdan-Lusztig preorders
and equivalence classes for $\sim_L$, $\sim_R$ and $\sim_J$ are exactly the 
corresponding Kazhdan-Lusztig cells.
The cell module $C_{\mathcal{L}}$ is the Kazhdan-Lusztig cell module, see \cite{KL}.

\subsection{Example IV: decategorifications of finitary $2$-categories}\label{s3.7}

For $A=A_{\ccC}$, where $\cC$ is a finitary $2$-category, as described in 
Subsection~\ref{s2.5}, with respect to the positive basis of indecomposable $1$-morphisms, 
the relations $\leq_L$, $\leq_R$, $\leq_J$, $\sim_L$, $\sim_R$ and $\sim_J$ are described
in \cite{MM1}. The cell module $C_{\mathcal{L}}$ is the decategorification of the 
cell $2$-representation $\mathbf{C}_{\mathcal{L}}$ of $\cC$ defined in \cite{MM1,MM2}.

\section{Perron-Frobenius Theorem}\label{s4}

In this section we recall the following theorem, due to Perron and Frobenius, see \cite{Fr1,Fr2,Pe}. 
It will be a crucial tool in the definition of special modules in the next section.

\begin{theorem}[Perron-Frobenius]\label{thm2}
Let $M\in\mathrm{Mat}_{k \times k}(\mathbb{R}_{>0})$. Then there is a positive real number $\lambda$,
called the {\em Perron-Frobenius eigenvalue} of $M$, such that the following statements hold:
\begin{enumerate}[$($i$)$]
\item\label{thm2.1} The number $\lambda$ is an eigenvalue of $M$.
\item\label{thm2.2} Any other eigenvalue $\mu\in\mathbb{C}$ of $M$ satisfies $|\mu|<\lambda$.
\item\label{thm2.3} The eigenvalue $\lambda$ has algebraic (and hence also geometric) multiplicity $1$.
\item\label{thm2.4} There is $v\in \mathbb{R}_{>0}^k$ such that $Mv=\lambda v$.
There is also $\hat{v}\in \mathbb{R}_{>0}^k$ such that $\hat{v}^tM=\lambda \hat{v}^t$.
\item\label{thm2.5} Any $w\in \mathbb{R}_{\geq 0}^k$ which is an eigenvector of $M$ (with some eigenvalue)
is a scalar multiple of $v$, and similarly for $\hat{v}$.
\item\label{thm2.6} If $v$ and $\hat{v}$ above are chosen such that $\hat{v}^tv=(1)$, then
\begin{displaymath}
\lim_{n\to\infty}\frac{M^n}{\lambda^n}=v\hat{v}^t. 
\end{displaymath}
\end{enumerate}
\end{theorem}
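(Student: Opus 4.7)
The plan is to prove the six statements in sequence: a Brouwer fixed-point argument for existence, a pairing trick with the left eigenvector for simplicity and eigenvalue identification, a triangle-inequality comparison for the bound on other eigenvalues, and a spectral-projector argument for the limit.

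First, for (i) and half of (iv), I would apply Brouwer's fixed-point theorem to the continuous self-map $f\colon\Delta\to\Delta$ given by $f(v)=Mv/\|Mv\|_1$, where $\Delta=\{v\in\mathbb{R}^k_{\geq 0}:\sum_i v_i=1\}$. Strict positivity of $M$ guarantees that $Mv$ has all entries strictly positive for every $v\in\Delta$, so $f$ is well-defined and continuous. A fixed point $v$ satisfies $Mv=\lambda v$ with $\lambda:=\|Mv\|_1>0$, and $v=\lambda^{-1}Mv$ is then componentwise strictly positive. Applying the same argument to $M^t$ produces $\hat v\in\mathbb{R}^k_{>0}$ with $\hat v^tM=\lambda'\hat v^t$; the pairing $\lambda\hat v^tv=\hat v^tMv=\lambda'\hat v^tv$ together with $\hat v^tv>0$ forces $\lambda=\lambda'$.

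Next, for (v), any nonnegative eigenvector $w$ satisfying $Mw=\mu w$ must be strictly positive (since $Mw$ is), so pairing with $\hat v$ yields $\mu=\lambda$; to see that $w$ is a scalar multiple of $v$, take the largest $t>0$ with $v-tw\geq 0$ componentwise, observe that $v-tw$ is then a nonnegative $\lambda$-eigenvector with at least one zero coordinate, and conclude $v-tw=0$ by the same strict-positivity observation. For (ii), given $Mw=\mu w$ with $w\in\mathbb{C}^k$, the triangle inequality gives $M|w|\geq|\mu|\,|w|$ componentwise; pairing with $\hat v$ then gives $|\mu|\leq\lambda$. For strict inequality when $|\mu|=\lambda$, equality through the pairing forces $M|w|=\lambda|w|$, so $|w|$ is a positive multiple of $v$; equality in the triangle inequality combined with strict positivity of $M$ then forces all coordinates of $w$ to share a single complex phase, whence $w$ is a complex multiple of $|w|$ and $\mu=\lambda$.

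For (iii), suppose there were a generalized eigenvector $u$ with $(M-\lambda I)u=v$; then $\hat v^tv=\hat v^t(M-\lambda I)u=0$, contradicting $\hat v^tv>0$. This rules out Jordan blocks of size greater than one at $\lambda$, so the algebraic multiplicity agrees with the geometric multiplicity, which equals $1$ by (v). For (vi), normalizing so $\hat v^tv=1$ makes $P:=v\hat v^t$ the spectral projector onto the Perron eigenspace; writing $M=\lambda P+N$ with $PN=NP=0$ and the spectral radius of $N$ strictly less than $\lambda$ by (ii), one has $M^n=\lambda^nP+N^n$, and so $M^n/\lambda^n\to P=v\hat v^t$. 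The main obstacle is coupling the strict inequality in (ii) with the algebraic simplicity in (iii): both statements fail for merely nonnegative matrices and depend crucially on the strict positivity of every entry of $M$, which is precisely what makes the phase-alignment argument and the orthogonality property $\hat v^tv\neq 0$ available.
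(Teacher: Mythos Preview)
The paper does not prove Theorem~\ref{thm2}; it merely recalls the classical Perron--Frobenius theorem and cites the original sources \cite{Fr1,Fr2,Pe}. Your proposal therefore goes well beyond what the paper does, supplying a complete and essentially standard proof where the paper gives none.

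Your argument is correct in substance. One small wrinkle in the internal cross-referencing: when you deduce in (iii) that the geometric multiplicity of $\lambda$ equals $1$ ``by (v)'', note that (v) as stated concerns only nonnegative eigenvectors, not arbitrary complex ones. The fact you actually need---that every complex $\lambda$-eigenvector is a scalar multiple of $v$---is precisely what your phase-alignment argument in (ii) establishes when specialised to $\mu=\lambda$: equality $|\mu|=\lambda$ forces $M|w|=\lambda|w|$, hence $|w|$ is a positive multiple of $v$, and then the equality case of the triangle inequality with all $M_{ij}>0$ forces $w$ to be a complex scalar times $|w|$. So the content is already in your write-up; it would be cleaner to cite the argument from (ii) rather than (v) at that point. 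With that adjustment, your chain (Brouwer for existence, pairing for $\lambda=\lambda'$, triangle inequality plus phase alignment for (ii), the generalized-eigenvector contradiction for (iii), and the spectral-projector decomposition $M=\lambda P+N$ with $PN=NP=0$ for (vi)) is a clean and self-contained proof.
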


The vector $v\in \mathbb{R}_{>0}^k$ from Theorem~\ref{thm2}\eqref{thm2.4} is called a 
{\em Perron-Frobenius eigenvector}. By Theorem~\ref{thm2}\eqref{thm2.5}, a Perron-Frobenius eigenvector
is defined uniquely up to a positive scalar.

\section{Special modules: definition and basic properties}\label{s5}

\subsection{Perron-Frobenius elements for based modules and special subquotients}\label{s5.1}

Let $\Bbbk$ be a subfield of $\mathbb{C}$. Consider a finite dimensional $\Bbbk$-algebra $A$
and a finite dimensional $A$-module $V$ with a fixed basis $\mathbf{v}=\{v_1,v_2,\dots,v_m\}$.
We will call the pair $(V,\mathbf{v})$ a {\em based $A$-module}. An element $a\in A$ is called 
a {\em Perron-Frobenius} element for a based $A$-module $(V,\mathbf{v})$ provided that all 
entries of the matrix of the action of $a$ on $V$ with respect to the basis $\mathbf{v}$  are 
positive real numbers. 

Given a Perron-Frobenius element $a\in A$ for a based $A$-module $(V,\mathbf{v})$, let $\lambda$
be the Perron-Frobenius eigenvalue of the linear operator $a$ on $V$. A simple $A$-subquotient
$L$ of $V$ is called a {\em special subquotient} with respect to  $a$, provided that $\lambda$
is an eigenvalue of $a$ acting on $L$.  As a consequence of the Perron-Frobenius Theorem, we record
the following:

\begin{corollary}\label{cor3}
Given a Perron-Frobenius element $a\in A$ for a based $A$-module $(V,\mathbf{v})$, there is a unique,
up to isomorphism, special subquotient $L$ of $V$ with respect to $a$, moreover, $[V:L]=1$.
\end{corollary}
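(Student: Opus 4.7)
The plan is to apply the Perron-Frobenius Theorem directly to the matrix of $a$ acting on $V$ in the basis $\mathbf{v}$, which by hypothesis lies in $\mathrm{Mat}_{m\times m}(\mathbb{R}_{>0})$. The key input is Theorem~\ref{thm2}\eqref{thm2.3}: the Perron-Frobenius eigenvalue $\lambda$ of the action of $a$ on $V$ has algebraic multiplicity exactly one.

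Next, I would fix an arbitrary composition series $0 = V_0 \subset V_1 \subset \cdots \subset V_r = V$ and factor the characteristic polynomial of $a$ acting on $V$ as $\prod_{i=1}^{r} p_i(x)$, where $p_i(x)$ is the characteristic polynomial of the action of $a$ on the composition factor $V_i/V_{i-1}$. Since $\lambda$ is a simple root of the product, it is a simple root of exactly one factor $p_{i_0}$ and fails to be a root of any other $p_i$. Setting $L := V_{i_0}/V_{i_0-1}$ gives a simple subquotient on which $\lambda$ is an eigenvalue of $a$, establishing existence of a special subquotient.

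For uniqueness and the multiplicity claim, I would use two standard observations: isomorphic $A$-modules have identical characteristic polynomials for every fixed element of $A$, and by the Jordan--H\"older Theorem the isomorphism classes of simple subquotients of $V$, counted with multiplicities, coincide with those of the composition factors $V_i/V_{i-1}$. If some $V_j/V_{j-1}$ with $j\neq i_0$ were isomorphic to $L$, then $p_j$ would equal $p_{i_0}$ and hence have $\lambda$ as a root, contradicting the choice of $i_0$; this forces $[V:L]=1$. Conversely, if $L'$ is any special simple subquotient of $V$, then $L'\cong V_j/V_{j-1}$ for some $j$, and since $\lambda$ must be an eigenvalue of $a$ on $L'$ we conclude $j=i_0$ and $L'\cong L$.

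The argument has no serious obstacle: its entire content is the multiplicativity of characteristic polynomials along a composition series, combined with the simplicity statement in Theorem~\ref{thm2}\eqref{thm2.3}. The only mild care needed is to identify isomorphism classes of simple subquotients with composition factors, which is a standard consequence of Jordan--H\"older via refinement of filtrations.
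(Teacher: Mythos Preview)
Your argument is correct and is precisely the natural way to unpack the statement; the paper itself offers no proof beyond the phrase ``as a consequence of the Perron-Frobenius Theorem, we record the following'', so your write-up supplies exactly the routine details (multiplicativity of characteristic polynomials along a filtration, Jordan--H\"older, and Theorem~\ref{thm2}\eqref{thm2.3}) that the authors left implicit.
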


\subsection{Perron-Frobenius elements for cell modules}\label{s5.2}

Let $\Bbbk$ be a subfield of $\mathbb{C}$ and $A$ a $\Bbbk$-algebra (of finite dimension $n$ over $\Bbbk$) 
with a fixed positive basis $\mathbf{B}$. For a left cell $\mathcal{L}$ in $\mathbf{n}$, consider
the corresponding cell module $C_{\mathcal{L}}$ as defined in Subsection~\ref{s3.3}. 
Denote by $\mathbf{B}_{\mathcal{L}}$ the {\em standard basis} of $C_{\mathcal{L}}$ given 
by the images of the elements $a_i$, where $i\in \mathcal{L}$.

For $i=1,2,\dots,n$, fix some positive real numbers $c_i\in \Bbbk$. Set $\mathbf{c}:=(c_1,c_2,\dots,c_n)$ and
\begin{equation}\label{eq9}
a(\mathbf{c}):=\sum_{i=1}^n c_i a_i\in A.
\end{equation}

\begin{lemma}\label{lem4}
The element $a(\mathbf{c})$ is a Perron-Frobenius element for the based module 
$(C_{\mathcal{L}},\mathbf{B}_{\mathcal{L}})$.
\end{lemma}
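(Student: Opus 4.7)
The goal is to show that the matrix of $a(\mathbf{c})$ acting on $C_{\mathcal{L}}$, expressed in the basis $\mathbf{B}_{\mathcal{L}}$, has all entries in $\mathbb{R}_{>0}$. My plan is to compute these matrix entries explicitly from \eqref{eq1} and then extract positivity from the defining property of a left cell.

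First, I would fix $j\in\mathcal{L}$ and expand
\begin{displaymath}
a(\mathbf{c})\cdot a_j \;=\; \sum_{i=1}^n\sum_{k=1}^n c_i\,\gamma_{i,j}^{(k)}\, a_k
\end{displaymath}
inside $A$. Then I would reduce this expression modulo $N_{\mathcal{L}}$ using Proposition~\ref{prop1} and the fact that the image of $a_k$ in $C_{\mathcal{L}}$ is nonzero precisely when $k\in\mathcal{L}$ (the terms with $k\in \underline{\overline{\mathcal{L}}}$ vanish, and those with $k\notin\overline{\mathcal{L}}$ do not occur, since $\gamma_{i,j}^{(k)}\neq 0$ forces $k\geq_L j$, as noted in the proof of Proposition~\ref{prop1}). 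The resulting matrix entry of $a(\mathbf{c})$ in the position $(k,j)$, for $k,j\in\mathcal{L}$, is therefore
\begin{displaymath}
\sum_{i=1}^n c_i\,\gamma_{i,j}^{(k)}.
\end{displaymath}

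Each summand is non-negative because $c_i>0$ and $\gamma_{i,j}^{(k)}\in\mathbb{R}_{\geq 0}$, so the whole sum lies in $\mathbb{R}_{\geq 0}$. The only remaining task, which is the crux of the argument, is to show that this sum is strictly positive. For this, it suffices to exhibit a single $i\in\mathbf{n}$ with $\gamma_{i,j}^{(k)}>0$, i.e.\ with $k\in i\star j$. Since $j,k\in\mathcal{L}$, we have $j\sim_L k$, and in particular $j\leq_L k$; by the definition of $\leq_L$ in Subsection~\ref{s3.2}, this means there exists $s\in\mathbf{n}$ with $k\in s\star j$, giving the required $i=s$.

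Combining these observations yields that every entry of the matrix of $a(\mathbf{c})$ in $\mathbf{B}_{\mathcal{L}}$ is a strictly positive real number, which is exactly the condition defining a Perron-Frobenius element in Subsection~\ref{s5.1}. I do not expect a genuine obstacle: the only thing to be careful about is not confusing the two directions of the left preorder in Subsection~\ref{s3.2}, and checking that the positivity of \emph{all} the $c_i$ (not just some) is indeed used only implicitly, while what really matters is existence of one $i$ contributing a positive structure constant.
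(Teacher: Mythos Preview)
Your proof is correct and follows essentially the same approach as the paper: both arguments reduce to showing that for any $j,k\in\mathcal{L}$ there exists some index (your $s$, the paper's $k$) whose structure constant into the $(k,j)$-entry is positive, which is exactly the content of $j\leq_L k$. Your version is somewhat more explicit about the passage to the quotient $C_{\mathcal{L}}$, but the key idea is identical; one small caveat on your closing remark is that positivity of \emph{all} the $c_i$ is genuinely needed, since the index $s$ that works depends on the pair $(k,j)$ and is not uniform.
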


\begin{proof}
Since $\mathbf{B}$ is a positive basis, it follows that all entries of the matrix of the action 
of each $a_i$, where $i\in\mathbf{n}$, on $C_{\mathcal{L}}$ in the basis $\mathbf{B}_{\mathcal{L}}$ 
are non-negative real numbers. 

Let $i,j\in\mathcal{L}$. Then there is $k\in\mathbf{n}$ such that $\gamma_{k,j}^{(i)}\neq 0$, which
implies that the $(i,j)$-th entry in the matrix of the action of $a_k$ on $C_{\mathcal{L}}$ is positive.
As $c_k>0$, combined with the previous paragraph, we get that the $(i,j)$-th entry in the matrix 
of the action of $a(\mathbf{c})$ on $C_{\mathcal{L}}$ is positive. As $i$ and $j$ were arbitrary,
the claim follows.
\end{proof}

\subsection{Special subquotients of cell modules}\label{s5.3}

For each left cell $\mathcal{L}$ and each $\mathbf{c}\in(\mathbb{R}_{>0}\cap \Bbbk)^n$, the discussion
above allows us to define the corresponding {\em special} subquotient $L_{\mathcal{L},\mathbf{c}}$ of
$C_{\mathcal{L}}$.

\subsection{Independence of $\mathbf{c}$}\label{s5.4}

Our first main observation is the following:

\begin{theorem}\label{thm5}
For a fixed left cell $\mathcal{L}$ and any $\mathbf{c},\mathbf{c}'\in(\mathbb{R}_{>0}\cap \Bbbk)^n$, 
there is an isomorphism $L_{\mathcal{L},\mathbf{c}}\cong L_{\mathcal{L},\mathbf{c}'}$.
\end{theorem}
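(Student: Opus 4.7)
The strategy is a continuity argument, carried out after extending scalars to $\mathbb{R}$, followed by a descent step. The extension of scalars is needed because $(\mathbb{R}_{>0}\cap\Bbbk)^n$ may fail to be connected in the Euclidean topology when $\Bbbk$ is a proper subfield of $\mathbb{R}$ (for instance $\Bbbk=\mathbb{Q}$). Set $A_{\mathbb{R}}:=A\otimes_{\Bbbk}\mathbb{R}$ and $C:=C_{\mathcal{L}}\otimes_{\Bbbk}\mathbb{R}$. The basis $\mathbf{B}$ remains positive over $\mathbb{R}$, so by Lemma~\ref{lem4} and Theorem~\ref{thm2}, for every $\mathbf{c}\in\mathbb{R}_{>0}^n$ we obtain a Perron-Frobenius eigenvalue $\lambda(\mathbf{c})$ of $a(\mathbf{c})$ acting on $C$ together with a special $A_{\mathbb{R}}$-subquotient $L^{\mathbb{R}}_{\mathcal{L},\mathbf{c}}$ of $C$. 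Let $M_1,\dots,M_s$ be the distinct simple $A_{\mathbb{R}}$-composition factors of $C$ with multiplicities $m_1,\dots,m_s$. Then the characteristic polynomial of $a(\mathbf{c})$ on $C$ factors as $\prod_k\chi_{M_k,a(\mathbf{c})}(x)^{m_k}$, and Theorem~\ref{thm2}\eqref{thm2.3} forces a unique index $\varphi(\mathbf{c})$ with $\chi_{M_{\varphi(\mathbf{c})},a(\mathbf{c})}(\lambda(\mathbf{c}))=0$ and $m_{\varphi(\mathbf{c})}=1$; by construction $L^{\mathbb{R}}_{\mathcal{L},\mathbf{c}}\cong M_{\varphi(\mathbf{c})}$.

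Next I would show that $\varphi:\mathbb{R}_{>0}^n\to\{1,\dots,s\}$ is locally constant. The coefficients of each $\chi_{M_k,a(\mathbf{c})}(x)$ are polynomials in $\mathbf{c}$, and $\lambda(\mathbf{c})$ depends continuously on $\mathbf{c}$, being a simple root of the characteristic polynomial of $a(\mathbf{c})$ on $C$, whose coefficients are themselves polynomial in $\mathbf{c}$. Hence each function $f_k(\mathbf{c}):=\chi_{M_k,a(\mathbf{c})}(\lambda(\mathbf{c}))$ is continuous. At any point $\mathbf{c}_0$ we have $f_j(\mathbf{c}_0)\neq 0$ for $j\neq\varphi(\mathbf{c}_0)$, so $f_j\neq 0$ on some neighbourhood $U$ of $\mathbf{c}_0$. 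Since $\lambda(\mathbf{c})$ is an eigenvalue of $a(\mathbf{c})$ on $C$, it must be an eigenvalue on some $M_k$, i.e.\ $f_k(\mathbf{c})=0$ for some $k$, and throughout $U$ this $k$ is forced to equal $\varphi(\mathbf{c}_0)$. As $\mathbb{R}_{>0}^n$ is convex and hence connected, $\varphi$ is globally constant; write $M$ for the resulting common value $M_{\varphi(\mathbf{c})}$.

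Finally, I would descend to $\Bbbk$. For $\mathbf{c}\in(\mathbb{R}_{>0}\cap\Bbbk)^n$, $\lambda(\mathbf{c})$ is an eigenvalue of $a(\mathbf{c})$ on the $\Bbbk$-simple factor $L_{\mathcal{L},\mathbf{c}}$ of $C_{\mathcal{L}}$, hence also on $L_{\mathcal{L},\mathbf{c}}\otimes_{\Bbbk}\mathbb{R}$, so $M$ occurs as an $A_{\mathbb{R}}$-composition factor of $L_{\mathcal{L},\mathbf{c}}\otimes_{\Bbbk}\mathbb{R}$. The theorem then follows from the descent lemma: if $L,L'$ are simple $A$-modules and $L\otimes_{\Bbbk}\mathbb{R}$, $L'\otimes_{\Bbbk}\mathbb{R}$ share a common $A_{\mathbb{R}}$-composition factor, then $L\cong L'$. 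To prove the lemma, choose a $\Bbbk$-basis of $\mathbb{R}$; this exhibits $L\otimes_{\Bbbk}\mathbb{R}$ as a direct sum of copies of $L$ when restricted back to $A$, so every simple $A$-subquotient of any $A_{\mathbb{R}}$-subquotient of $L\otimes_{\Bbbk}\mathbb{R}$ is isomorphic to $L$; the same observation applied to $L'$ forces $L\cong L'$. The main obstacle is this descent step rather than the continuity argument itself: one might naively hope to work purely over $\Bbbk$, but the non-connectedness of $(\mathbb{R}_{>0}\cap\Bbbk)^n$ and the fact that scalar extension can a priori split a $\Bbbk$-simple into non-isomorphic $\mathbb{R}$-constituents both require handling, which is exactly what the direct-sum observation accomplishes.
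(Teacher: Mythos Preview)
Your proof is correct and follows essentially the same approach as the paper's: a continuity/local-constancy argument over the connected domain $\mathbb{R}_{>0}^n$, followed by descent to $\Bbbk$. The only minor differences are that the paper extends scalars to $\mathbb{C}$ rather than $\mathbb{R}$, phrases continuity by showing that each preimage $X_L\subset\mathbb{R}_{>0}^n$ is closed (rather than arguing local constancy via the characteristic polynomials $\chi_{M_k}$), and handles the descent step more tersely by invoking the multiplicity-one property of Corollary~\ref{cor3}; your explicit descent lemma makes precise exactly what that invocation is using.
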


\begin{proof}
Assume first that $\Bbbk=\mathbb{C}$. Consider the map $\lambda:\mathbb{R}_{>0}^n\to \mathbb{R}_{>0}$
which sends $\mathbf{c}$ to the Perron-Frobenius eigenvalue of $a(\mathbf{c})$ on $C_{\mathcal{L}}$.
This map is, obviously, continuous. Let $\mathrm{Irr}(A)$ be the set of isomorphism classes of simple $A$-modules.
Consider the map $L_{\mathcal{L},{}_-}:\mathbb{R}_{>0}^n\to \mathrm{Irr}(A)$ which sends $\mathbf{c}$ to
$L_{\mathcal{L},\mathbf{c}}$. For $L\in \mathrm{Irr}(A)$, consider its preimage $X_L$ under the latter map
and assume it is non-empty.

We claim that from Theorem~\ref{thm2} it follows that $X_L$ is closed in $\mathbb{R}_{>0}^n$. Indeed, 
let $\mathbf{c}_i$ be a sequences in $\mathbb{R}_{>0}^n\cap X_L$ which converges to $\mathbf{c}\in \mathbb{R}_{>0}^n$.
Let $L_1,L_2,\dots,L_k=L$ be the list of simple subquotients of $C_{\mathcal{L}}$. By Theorem~\ref{thm2},
for each $i$, the value  $\lambda(\mathbf{c}_i)$ is the maximal absolute value of an
eigenvalue of $a(\mathbf{c}_i)$ on $L_k$ and is strictly bigger than the absolute value of any other 
eigenvalue of $a(\mathbf{c}_i)$ on any of the $L_j$'s. Taking the limit, we get that the maximal
absolute value of an eigenvalue of $a(\mathbf{c})$ on $L_k$ is not less than the supremum of the
absolute values over all other  eigenvalues of $a(\mathbf{c})$ on any of the $L_j$'s. Since 
the Perron-Frobenius eigenvalue has multiplicity one, it follows that $L_{\mathcal{L},\mathbf{c}}$ is still
isomorphic to $L$.

By  noting that $\mathrm{Irr}(A)$ is finite with discrete topology, 
we see that the preimage of a closed set under $L_{\mathcal{L},{}_-}$ is closed.
Therefore $L_{\mathcal{L},\mathbf{c}}$ is continuous and thus must be constant since
$\mathbb{R}_{>0}^n$ is connected.

If $\Bbbk\neq \mathbb{C}$ and the assertion of the theorem fails, then we can extend scalars  
from $\Bbbk$ to $\mathbb{C}$ and, because of the multiplicity one property established in Corollary~\ref{cor3}, 
obtain that the assertion of the theorem must also fail for 
the case $\Bbbk=\mathbb{C}$, which contradicts the above. This completes the proof.
\end{proof}

As $L_{\mathcal{L},\mathbf{c}}$ does not really depend on $\mathbf{c}$ by Theorem~\ref{thm5}, we 
will denote this module by  $L_{\mathcal{L}}$. In this way, we define a map from the set of left cells in
$\mathbf{n}$ to the set $\mathrm{Irr}(A)$ of isomorphism classes of simple $A$-modules. In general,
this map is neither injective nor surjective. 

\subsection{$J$-invariance of special subquotients}\label{s5.5}

Our second main observation is the following:

\begin{theorem}\label{thm7}
For any two left cells $\mathcal{L}$ and $\mathcal{L}'$ which belong to the same two-sided cell, 
we have $L_{\mathcal{L}}\cong L_{\mathcal{L}'}$.
\end{theorem}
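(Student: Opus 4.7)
By Theorem~\ref{thm5}, we may fix any $\mathbf{c}\in(\mathbb{R}_{>0}\cap\Bbbk)^n$ and set $a:=a(\mathbf{c})$; write $\lambda_{\mathcal{L}}$ and $\lambda_{\mathcal{L}'}$ for the Perron-Frobenius eigenvalues of $a$ on $C_{\mathcal{L}}$ and $C_{\mathcal{L}'}$ respectively. Let $\mathcal{J}$ be the common two-sided cell. The natural arena is the $A$-bimodule
\begin{displaymath}
Q:=A_{\geq\mathcal{J}}/A_{>\mathcal{J}},
\end{displaymath}
where $A_{\geq\mathcal{J}}$ and $A_{>\mathcal{J}}$ are the two-sided ideals spanned by $\{a_i:i\geq_J\mathcal{J}\}$ and $\{a_i:i>_J\mathcal{J}\}$ respectively. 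As a left $A$-module, $Q$ admits every cell module $C_{\mathcal{L}''}$ (for a left cell $\mathcal{L}''\subseteq\mathcal{J}$) as a subquotient, and the matrix of $a$ in the basis $\{a_i:i\in\mathcal{J}\}$ is block-triangular with respect to the left preorder on these cells, with diagonal blocks being the positive matrices of $a$ on each $C_{\mathcal{L}''}$ from Lemma~\ref{lem4}.

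My plan is to first show $\lambda_{\mathcal{L}}=\lambda_{\mathcal{L}'}$ and then extract the isomorphism $L_{\mathcal{L}}\cong L_{\mathcal{L}'}$. For the equality of eigenvalues, I use the fact that $\mathcal{L}\sim_J\mathcal{L}'$ gives, for any $i\in\mathcal{L}$ and $j\in\mathcal{L}'$, elements $s,t,s',t'\in\mathbf{n}$ such that $a_j$ occurs with positive coefficient in $a_sa_ia_t$ and $a_i$ occurs with positive coefficient in $a_{s'}a_ja_{t'}$, both inside $Q$. Combining these linkages with the asymptotic $a^N/\lambda_{\mathcal{L}}^N\to v_{\mathcal{L}}\hat v_{\mathcal{L}}^t$ from Theorem~\ref{thm2}\eqref{thm2.6} applied to the $C_{\mathcal{L}}$-block, and its counterpart for $C_{\mathcal{L}'}$, together with the fact that left multiplication by $a^N$ commutes with right multiplication by $a_t$ (or $a_{t'}$) on $Q$, one compares the asymptotic magnitudes of the positive sequences $\{a^Na_i\}$ and $\{a^Na_j\}$ in $Q$; the symmetric nature of the linkage forces $\lambda_{\mathcal{L}}=\lambda_{\mathcal{L}'}$, since otherwise one direction of the two bimodule relations would yield incompatible growth rates.

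With the common eigenvalue $\lambda$ in hand, I identify the special subquotients via right multiplication. Right multiplication by $a_t$ is a left $A$-module endomorphism of $Q$ that commutes with the action of $a$ and therefore preserves the $\lambda$-generalized eigenspace of $a$ on $Q$. By Theorem~\ref{thm2}\eqref{thm2.3} this eigenspace is one-dimensional on each diagonal block, and the relation $a_j\in a_sa_ia_t+(\text{other terms in }\mathcal{J})$ guarantees that the induced map between the $\mathcal{L}$-block and the $\mathcal{L}'$-block is nonzero on the Perron-Frobenius line. Hence the eigenlines of $C_{\mathcal{L}}$ and $C_{\mathcal{L}'}$ are identified, and by the uniqueness in Corollary~\ref{cor3} the simple composition factors housing these eigenlines agree, giving $L_{\mathcal{L}}\cong L_{\mathcal{L}'}$.

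The main obstacle will be the first step: translating the purely combinatorial statement $\mathcal{L}\sim_J\mathcal{L}'$ into the analytic equality $\lambda_{\mathcal{L}}=\lambda_{\mathcal{L}'}$. Since the action of $a$ on $Q$ is block-triangular but not block-diagonal, the Perron-Frobenius eigenvalues of distinct diagonal blocks are not directly comparable as spectra of a single positive matrix, and the argument must carefully exploit the commuting right $A$-action together with the positive Perron-Frobenius asymptotics of Theorem~\ref{thm2}\eqref{thm2.6} to force the equality.
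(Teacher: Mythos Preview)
Your plan has a genuine gap, and the paper's proof avoids it by a simple reduction you have not made.

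The central difficulty you identify yourself---proving $\lambda_{\mathcal{L}}=\lambda_{\mathcal{L}'}$ from the block-triangular action of $a$ on $Q$---is never actually carried out; you only say that the commuting right action ``together with the positive Perron--Frobenius asymptotics'' should force it. But the growth rate of $a^N a_i$ in $Q$ is governed by the largest Perron--Frobenius eigenvalue among \emph{all} left cells $\mathcal{L}''\geq_L\mathcal{L}$, not by $\lambda_{\mathcal{L}}$ alone, so the comparison you describe does not isolate $\lambda_{\mathcal{L}}$ and $\lambda_{\mathcal{L}'}$. Your Step~2 has the same problem: right multiplication by $a_t$ is an endomorphism of $Q$, but it does not in general induce a map between the \emph{subquotients} $C_{\mathcal{L}}$ and $C_{\mathcal{L}'}$ of $Q$; the phrase ``induced map between the $\mathcal{L}$-block and the $\mathcal{L}'$-block'' has no meaning unless one of them is a submodule and the other a quotient in a compatible way.

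The paper resolves both issues at once by first reducing, without loss of generality, to the case where $\mathcal{L}'$ is \emph{minimal} with respect to $\leq_L$ among left cells in $\mathcal{J}$. This minimality is exactly what makes right multiplication by a suitable $a_j$, followed by projection, descend to a well-defined $A$-module homomorphism $\varphi:C_{\mathcal{L}}\to C_{\mathcal{L}'}$ (the point being that anything strictly $L$-above $\mathcal{L}$ cannot land in $\mathcal{L}'$). Once $\varphi$ exists, there is no need to prove $\lambda_{\mathcal{L}}=\lambda_{\mathcal{L}'}$ separately: the Perron--Frobenius eigenvector $v\in C_{\mathcal{L}}$ has strictly positive coordinates, so $\varphi(v)$ is a nonzero vector with nonnegative coordinates in $C_{\mathcal{L}'}$, and Theorem~\ref{thm2}\eqref{thm2.5} forces it to be the Perron--Frobenius eigenvector there. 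Equality of eigenvalues and the isomorphism $L_{\mathcal{L}}\cong L_{\mathcal{L}'}$ then fall out simultaneously. The missing idea in your proposal is precisely this minimality reduction; without it your bimodule framework does not produce the required map between cell modules.
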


\begin{proof}
Denote by $\mathcal{J}$ the two-sided cell containing both $\mathcal{L}$ and $\mathcal{L}'$.
Without loss of generality, we may assume that the cell $\mathcal{L}'$ is minimal, with respect to $\leq_L$,
in the set of all left cells contained in $\mathcal{J}$. Consider the element
$a=a_1+a_2+\dots+a_n\in A$ and the cell modules $C_{\mathcal{L}}$ and $C_{\mathcal{L}'}$.

\begin{lemma}\label{lthm7}
For any $i\in\mathcal{L}$,
the set $i\star \mathbf{n}$ intersects all left cells which are minimal, 
with respect to $\leq_L$, in the set of all left cells contained in $\mathcal{J}$.
\end{lemma}

\begin{proof}
Since $\mathcal{J}$ is a two-sided cell, the set $\mathbf{n}\star i\star \mathbf{n}$ contains $\mathcal{J}$.
For any $j\in i\star \mathbf{n}$, any element $s\in \mathbf{n}\star j$ satisfies $s\geq_L j$. This
implies the claim of the lemma.
\end{proof}

From Lemma~\ref{lthm7}, we have that there
is $j\in \mathbf{n}$ such that $i\star j$ contains some element in $\mathcal{L}'$.
Now, right multiplication with $a_j$ followed by the projection onto $C_{\mathcal{L}'}$, defines
an $A$-module homomorphism  $\varphi$ from $C_{\mathcal{L}}$ to $C_{\mathcal{L}'}$. 
This homomorphism is non-zero by our choice of $j$ and it
sends, by construction, any linear combination of elements in $\mathbf{B}_{\mathcal{L}}$ with positive 
coefficients to a non-zero element in $C_{\mathcal{L}'}$. 

Let $v\in C_{\mathcal{L}}$ be an eigenvector of $a$ which is a linear combination of 
elements in $\mathbf{B}_{\mathcal{L}}$ with positive  coefficients. Note that $v$ exists by
Theorem~\ref{thm2}\eqref{thm2.4} and is unique up to a positive scalar by Theorem~\ref{thm2}\eqref{thm2.5}. 
Then $\varphi(v)$ is a non-zero eigenvector of $a$ in $C_{\mathcal{L}'}$. Since $v$ determines 
the subquotient $L_{\mathcal{L}}$ uniquely, it follows that this subquotient is not annihilated 
by $\varphi$. On the other hand, by construction, $\varphi(v)$ is a non-zero linear combination 
of elements in $\mathbf{B}_{\mathcal{L}'}$ with non-negative coefficients. Therefore the corresponding
eigenvalue is the Perron-Frobenius eigenvalue of $a$ for $C_{\mathcal{L}'}$ by 
Theorem~\ref{thm2}\eqref{thm2.5}. Combined with the definition of special subquotient, it follows
that $\varphi(L_{\mathcal{L}})\cong L_{\mathcal{L}'}$, completing the proof.
\end{proof}

\section{Special subquotients of cell modules: examples}\label{s6}

\subsection{Group algebras}\label{s6.1}

Let $G$ be a finite group and $A=\Bbbk[G]$ the corresponding group algebra.
Then we have the unique left cell $\mathcal{L}=\mathbf{n}$ and the corresponding 
cell module is just the left regular module ${}_AA$. The element
\begin{displaymath}
a:=\sum_{g\in G}g,
\end{displaymath}
considered as an element of the algebra $A$, is a Perron-Frobenius element for the
module ${}_AA$. On the other hand, the same element can be considered as an 
element of  $C_{\mathcal{L}}$ and we have $a\cdot a=|G|a$. Therefore, by
Theorem~\ref{thm2}\eqref{thm2.5}, the value $|G|$ is the
Perron-Frobenius eigenvalue of $A$ on ${}_AA$ and hence the special subquotient of 
${}_AA$ is the trivial $A$-module (represented inside ${}_AA$ as the submodule $\Bbbk a$).

The above admits the following generalization. Let $H$ be a subgroup of $G$ and
$\Bbbk[G/H]\cong\mathrm{Ind}^G_H(\mathrm{triv}_H)$ 
be the corresponding permutation module given by the left action of $G$ on the set of all cosets $gH$, where 
$g\in G$. The action of $G$ on the set of all such cosets is transitive and hence $a$ is a 
Perron-Frobenius element for the module $\Bbbk[G/H]$ with respect to the canonical basis given by the cosets.
The sum of all basis elements spans a submodule isomorphic to the trivial $G$-module and is 
an eigenvector for $a$. Therefore the special subquotient of $\Bbbk[G/H]$ is again
the trivial $A$-module.

\subsection{Semigroup algebras}\label{s6.2}

Let $S$ be a finite monoid with $|S|=n$ and fix the standard positive basis 
$\mathbf{B}=\{s\,:\,s\in S\}$ in the semigroup algebra $\Bbbk[S]$. In this setup, 
cells in $\mathbf{n}$ correspond to Green's equivalence relations on $S$,
see \cite{Gr} or \cite[Chapter~4]{GM}. Let $\mathcal{L}$ be a left cell and
$\mathcal{J}$ be the apex of $C_{\mathcal{L}}$ as defined in Section~\ref{s8}. Then $\mathcal{J}$ is a regular
$J$-class (see Proposition~\ref{prop1-1} below) and hence contains an 
idempotent, say $e$, see also \cite{GMS}.  Let $\mathcal{L}_e$ be the left cell containing $e$.
Let $G$ be the maximal subgroup of $S$ which corresponds to $e$. 
Right multiplication with elements of $G$ induces on the
$\Bbbk[S]$-module $C_{\mathcal{L}_e}$ the structure of a
$\Bbbk[S]$-$\Bbbk[G]$-bimodule. Let $\Bbbk_{\mathrm{triv}}$ denote the trivial
$G$-module, that is the vector space $\Bbbk$ on which all elements of $G$ act
as the identity operator. Then, by adjunction, the $S$-module
\begin{displaymath}
\Delta(\mathcal{L}_e,\Bbbk_{\mathrm{triv}}):=C_{\mathcal{L}_e}\bigotimes_{\Bbbk[G]}\Bbbk_{\mathrm{triv}} 
\end{displaymath}
has simple top which we denote by $L_e$, see \cite{GMS} or \cite[Chapter~11]{GM}.

\begin{proposition}\label{prop21}
The simple $\Bbbk[S]$-module $L_e$ is the special subquotient of $C_{\mathcal{L}}$.
\end{proposition}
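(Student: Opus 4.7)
The plan is to apply Theorem~\ref{thm7} to reduce to the case $\mathcal{L}=\mathcal{L}_e$, and then identify $L_e$ as the special subquotient of $C_{\mathcal{L}_e}$ directly. I would take $a=\sum_{s\in S}s$, so that by Lemma~\ref{lem4} and Theorem~\ref{thm2} the Perron-Frobenius eigenvector $v\in C_{\mathcal{L}_e}$ (with eigenvalue $\lambda$) has positive coefficients in $\mathbf{B}_{\mathcal{L}_e}$.

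The first step is to verify that $v$ descends to a nonzero element of $\Delta(\mathcal{L}_e,\Bbbk_{\mathrm{triv}})$. Since the left action of $a$ commutes with the right $\Bbbk[G]$-action on $C_{\mathcal{L}_e}$, the one-dimensional $\lambda$-eigenspace is $G$-stable, so $G$ acts on it through a character $\chi$. The right $G$-action permutes the basis $\mathcal{L}_e$ (simply transitively within each $\mathcal{H}$-class), hence preserves the sum of basis coefficients; comparing this sum in $v$ and $v\cdot g$ forces $\chi=1$, so $v\cdot g=v$ for every $g\in G$. Consequently the image $\bar v$ of $v$ in the coinvariants $\Delta(\mathcal{L}_e,\Bbbk_{\mathrm{triv}})=(C_{\mathcal{L}_e})_G$ has positive coefficients in the basis indexed by $\mathcal{H}$-classes in $\mathcal{L}_e$; in particular $\bar v\neq 0$ and $a\bar v=\lambda\bar v$.

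The central step is to show that $\bar v$ has nonzero image in the simple top $L_e$. Using the egg-box structure of the regular $\mathcal{J}$-class $\mathcal{J}$, for each $y\in\mathcal{L}_e$ the product $ey$ lies in $\mathcal{J}$ precisely when the $\mathcal{H}$-class $L_e\cap R_y=H_y$ contains an idempotent, in which case $ey\in R_e\cap L_y=H_e=G$; otherwise $ey$ falls into a strictly smaller $\mathcal{J}$-class and therefore vanishes in $C_{\mathcal{L}_e}$. Summing over the basis yields $e\bar v=C\cdot (e\otimes 1)$ in $\Delta(\mathcal{L}_e,\Bbbk_{\mathrm{triv}})$ with $C>0$, since the group $\mathcal{H}$-class $H_e$ itself contributes positive terms. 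On the other hand, each $x\in\mathcal{L}_e$ can be written as $x=te$ for some $t\in S$, so $e$ generates $C_{\mathcal{L}_e}$ as a $\Bbbk[S]$-module; hence $e\otimes 1$ generates $\Delta(\mathcal{L}_e,\Bbbk_{\mathrm{triv}})$, and its image in $L_e$ is nonzero.

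If the image of $\bar v$ in $L_e$ were zero, then so would be that of $e\bar v=C\cdot(e\otimes 1)$, a contradiction. Hence $\bar v$ maps to a nonzero $\lambda$-eigenvector of $a$ in $L_e$, and the uniqueness assertion of Corollary~\ref{cor3} identifies $L_e$ with the special subquotient of $C_{\mathcal{L}_e}$; Theorem~\ref{thm7} then transports this back to $C_{\mathcal{L}}$. The most delicate point will be the egg-box analysis of $e\bar v$: one must invoke the standard product rule in regular $\mathcal{J}$-classes and verify that $H_e$'s own contribution guarantees $C>0$.
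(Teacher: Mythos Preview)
Your argument for the case $\mathcal{L}=\mathcal{L}_e$ is correct and gives a clean hands-on identification of the Perron--Frobenius eigenvector inside $\Delta(\mathcal{L}_e,\Bbbk_{\mathrm{triv}})$. However, the reduction step via Theorem~\ref{thm7} has a genuine gap. Theorem~\ref{thm7} only applies when $\mathcal{L}$ and $\mathcal{L}_e$ lie in the \emph{same} two-sided cell, but in the statement of Proposition~\ref{prop21} the cell $\mathcal{J}$ is the \emph{apex} of $C_{\mathcal{L}}$, not the two-sided cell containing $\mathcal{L}$. When the $\mathcal{J}$-class of $\mathcal{L}$ is non-regular these differ: for instance, in the monoid $\{1,a,0\}$ with $a^2=0$, the left cell $\{a\}$ has apex $\{1\}$, so $\mathcal{L}_e=\{1\}$ sits in a different two-sided cell and Theorem~\ref{thm7} says nothing. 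A reduction of this kind is eventually available (Theorem~\ref{thmclass}), but that is proved much later and depends on the present circle of ideas. Your right-$G$-action argument is genuinely tied to $\mathcal{L}_e$, so without the reduction the general case is not covered.

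The paper avoids any reduction by working directly in $C_{\mathcal{L}}$ and acting on the \emph{left} by $b=\sum_{g\in G}g$. Since every $g\in G$ lies in the apex $\mathcal{J}$, each $g\cdot v$ is nonzero with non-negative coefficients, hence so is $bv$; Proposition~\ref{prop51} then forces the image of $bv$ in the special subquotient $L$ to be nonzero. Because $xb=b$ for all $x\in G$, this image is $G$-fixed, so $\mathrm{Res}^S_G L$ contains $\Bbbk_{\mathrm{triv}}$, and the Clifford--Munn--Ponizovski\u{\i} adjunction yields a surjection $\Delta(\mathcal{L}_e,\Bbbk_{\mathrm{triv}})\twoheadrightarrow L$, whence $L\cong L_e$. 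This is shorter than your route through coinvariants and egg-box products, and it works uniformly for all $\mathcal{L}$.
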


\begin{proof}
Denote by $L$ the special subquotient of $C_{\mathcal{L}}$. Take any 
\begin{displaymath}
a= \sum_{s\in S}a_s s, \quad\text{ where } a_s\in\mathbb{R}_{>0}\cap \Bbbk,
\end{displaymath}
and let $v\in C_{\mathcal{L}}$ be a corresponding Perron-Frobenius eigenvector.
Consider the element
\begin{displaymath}
b=\sum_{g\in G}g\in A. 
\end{displaymath}
Because of our definition of $G$, the element $bv$ is a non-zero linear combination
of elements in $\mathbf{B}_{\mathcal{L}}$ with non-negative real coefficients.
From Proposition~\ref{prop51} we thus obtain that the image of $bv$ in $L$ is non-zero.
Therefore $bL\neq 0$.

Note that $xb=b$, for any $x\in G$, by construction. Therefore  $xbv=bv$, for all $x\in G$.
This means that $\mathrm{Res}^S_G(L)$ contains a submodule isomorphic to $\Bbbk_{\mathrm{triv}}$.
By adjunction, it follows that $\Delta(\mathcal{L}_e,\Bbbk_{\mathrm{triv}})$ surjects onto
$L$. This implies $L_e=L$ and completes the proof.
\end{proof}

\subsection{Group algebras of finite Weyl groups}\label{s6.3}

For a finite Weyl group $W$, consider the group algebra $A:=\mathbb{C}[W]$. It is positively based 
with respect to the Kazhdan-Lusztig basis $\mathbf{B}:=\{\underline{H}_w\,:\, w\in W\}$
(we follow the normalization of \cite{So}). Left
cell representations of $A$ in this setup are exactly the Kazhdan-Lusztig left cell modules. By
\cite{Lu3} (see also \cite{Ge1,Ge2} for alternative approaches and further details), 
the class of Kazhdan-Lusztig left cell modules coincides with
the class of {\em cells} or {\em constructible representations} as defined in \cite{Lu2}.
In \cite{Lu1}, Lusztig defines in this setup the class of so-called {\em special} representations
and in \cite{Lu2} shows that each constructible representation has exactly one special subquotient
(occurring with multiplicity one). 

\begin{proposition}\label{prop41}
Let $\mathcal{L}$ be a left cell in $A$.
The $L_{\mathcal{L}}$ is a special representation in the sense of Lusztig.
\end{proposition}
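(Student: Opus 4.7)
The plan is to combine Theorem~\ref{thm7} with Lusztig's characterization of special representations inside a family, reducing everything to a statement about two-sided cells and then identifying the two candidates. By Theorem~\ref{thm7}, $L_{\mathcal{L}}$ depends only on the Kazhdan-Lusztig two-sided cell $\mathcal{J}$ containing $\mathcal{L}$. On the other hand, by \cite{Lu1}, Lusztig's special representation $E_{sp}$ in the family parametrized by $\mathcal{J}$ also depends only on $\mathcal{J}$. So it suffices to exhibit an isomorphism $L_{\mathcal{L}}\cong E_{sp}$ for a single convenient choice of $\mathcal{L}\subseteq\mathcal{J}$.

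Both candidate modules appear inside $C_{\mathcal{L}}$ with multiplicity one: for $L_{\mathcal{L}}$ this is Corollary~\ref{cor3}, while for $E_{sp}$ it is Lusztig's theorem from \cite{Lu2}. To match the two multiplicity-one constituents, I would fix a Perron-Frobenius element $a(\mathbf{c})=\sum_{w\in W}c_w\underline{H}_w$ and try to single out $E_{sp}$ by a positivity property: namely that, among all simple constituents of $C_{\mathcal{L}}$, only $E_{sp}$ contains a nonzero vector whose coefficients in the standard basis $\mathbf{B}_{\mathcal{L}}$ are strictly positive. Once such a positivity characterization of $E_{sp}$ is in hand, the uniqueness clause in Theorem~\ref{thm2}\eqref{thm2.5} forces the Perron-Frobenius eigenvector of $a(\mathbf{c})$ to lie inside a copy of $E_{sp}$, and hence $L_{\mathcal{L}}\cong E_{sp}$.

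The main obstacle is precisely this positivity characterization of $E_{sp}$. I would approach it via Lusztig's asymptotic Hecke algebra $J_{\mathcal{J}}$ together with the positivity of Kazhdan-Lusztig polynomials. Lusztig associates to the family a finite group $\Gamma_{\mathcal{J}}$ whose irreducibles parametrize the simples in the family, with $E_{sp}$ corresponding to the trivial $\Gamma_{\mathcal{J}}$-module; heuristically, only this trivial module can support a positive $\Gamma_{\mathcal{J}}$-invariant vector inherited from the KL basis, which should pin down the PF eigenvector. An alternative route is to invoke Geck's characterization of $E_{sp}$ via the $\mathbf{a}$-function and generic degrees, relating the Perron-Frobenius eigenvalue of $a(\mathbf{c})$ on $C_{\mathcal{L}}$ to the asymptotic leading term of its action and identifying $E_{sp}$ as the unique simple responsible for the dominant contribution. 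Either route will need to work uniformly across the classification of finite Weyl groups and must rely in an essential way on positivity features specific to the Kazhdan-Lusztig basis.
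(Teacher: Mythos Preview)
Your strategy diverges from the paper's, and the step you yourself flag as the ``main obstacle'' is a genuine gap that you do not close. You want to characterize Lusztig's $E_{sp}$ as the unique constituent of $C_{\mathcal{L}}$ supporting a strictly positive vector in the basis $\mathbf{B}_{\mathcal{L}}$, then invoke Theorem~\ref{thm2}\eqref{thm2.5}. But as stated this is not even well-posed: a strictly positive vector in $C_{\mathcal{L}}$ need not lie in any single simple summand. What you actually need is that the Perron--Frobenius eigenvalue of $a(\mathbf{c})$ occurs on the copy of $E_{sp}$ inside $C_{\mathcal{L}}$, which is precisely the assertion $L_{\mathcal{L}}\cong E_{sp}$ you are trying to prove. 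Neither the $J$-ring/$\Gamma_{\mathcal{J}}$ heuristic nor Geck's $\mathbf{a}$-function characterization hands you this directly; both are plausible lines of attack, but you give no argument, only a wish list.

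The paper sidesteps this entirely with a much more elementary (though case-based) trick. Using Theorem~\ref{thm7}, $L_{\mathcal{L}}$ is a common simple subquotient of \emph{all} the cell modules $C_{\mathcal{L}_1},\dots,C_{\mathcal{L}_k}$ for the left cells $\mathcal{L}_i\subset\mathcal{J}$; by \cite{Lu2}, so is Lusztig's special representation. The paper then proves (Lemma~\ref{lem42}) that, for irreducible $W$ not of type $G_2$ or $F_4$, these cell modules have \emph{exactly one} common simple subquotient, forcing $L_{\mathcal{L}}\cong E_{sp}$. The lemma is verified type by type from Lusztig's explicit lists of constructible characters in \cite{Lu2}, with types $B$ and $D$ handled via the combinatorics of symbols. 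Types $G_2$ and $F_4$, where the lemma fails, are treated separately: $G_2$ by direct calculation, and $F_4$ by observing from \cite{Lu2} that the special representation is the unique simple occurring with multiplicity one in every constructible representation of its family. So the paper trades your hoped-for uniform positivity argument for a concrete type-by-type intersection argument that actually goes through.
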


\begin{proof}
It is enough to consider the case of irreducible $W$. If $W$ is of type $G_2$,
the assertion is proved by a direct calculation. If $W$ is of type $F_4$, then from the lists in
\cite{Lu2} it follows that, for each two-sided cell $\mathcal{J}$ in $W$, the special 
representation in the sense of Lusztig is the only simple representation which appears with
multiplicity one in all constructible representations associated to $\mathcal{J}$.
This implies the claim of the proposition for type $F_4$.
Therefore, from now on, we assume that $W$ is not of type $G_2$ or $F_4$.

Let $\mathcal{J}$ be the two-sided cell containing $\mathcal{L}$ and $\mathcal{L}=\mathcal{L}_1$,
$\mathcal{L}_2$,\dots, $\mathcal{L}_k$ be a complete list of all left cells in $\mathcal{J}$.
By Theorem~\ref{thm7}, we know that $L_{\mathcal{L}}=L_{\mathcal{L}_i}$ for all $i=1,2,\dots,k$.
Therefore the assertion of our proposition follows directly from the following lemma:

\begin{lemma}\label{lem42}
Assume $W$ is irreducible and not of type $G_2$ or $F_4$. Let $\mathcal{J}$ be a 
two-sided cell in $W$ and $\mathcal{L}_1$, $\mathcal{L}_2$,\dots, $\mathcal{L}_k$ be a complete 
list of all left cells in $\mathcal{J}$. Then the $A$-modules $C_{\mathcal{L}_1}$,
$C_{\mathcal{L}_2}$,\dots, $C_{\mathcal{L}_k}$ have exactly one simple subquotient in common.
\end{lemma}

We note that Lemma~\ref{lem42} fails if $W$ is of type $G_2$ or $F_4$, as follows easily from the lists 
of constructible characters given in \cite{Lu2}.

\begin{proof}
For all exceptional  types the assertion follows from the lists of constructible characters given in \cite{Lu2}.
In type $A$, all $L_{\mathcal{L}_i}$ are simple and isomorphic (see \cite{Lu2}) and hence the assertion
is obvious. It remains to consider the types $B$ and $D$. In both cases the assertion follows from the
description of constructible representations given in \cite{Lu2}. We outline the argument for type $B$
and leave type $D$ as an exercise for the reader.

It type $B_n$, following \cite{Lu0}, irreducible representations are indexed by
certain (equivalence classes of) arrays of the form
\begin{displaymath}
\left(\begin{array}{c}\boldsymbol{\lambda}\\\boldsymbol{\mu}\end{array}\right)=
\left(\begin{array}{ccccc}\lambda_1&\lambda_2&\dots&\lambda_{m}&\lambda_{m+1}\\
\mu_1&\mu_2&\dots&\mu_m&\end{array}\right), 
\end{displaymath}
called {\em symbols}. These symbols have, among others, the following properties:
\begin{itemize}
\item all entries are non-negative integers which add up to $n+m^2$ for some $m$;
\item each integer appears in the array at most twice and, in case some integer appears twice, 
it appears both in the first and in the second rows;
\item elements in both rows increase from left to right.
\end{itemize}
Such an array indexes a special representation in the sense of \cite{Lu1} if and only if 
\begin{equation}\label{eq11}
\lambda_1\leq \mu_1\leq\lambda_2\leq\mu_2\leq\dots\leq\lambda_{m+1}.
\end{equation}
We will say that such a symbol is {\em special}.

Let us fix a special symbol as above and let 
\begin{equation}\label{eq12}
\gamma_1<\gamma_2<\gamma_3<\dots<\gamma_{2k+1} 
\end{equation}
be the sequence obtained from \eqref{eq11} be deleting all elements which appear twice.
Constructible representations containing the special representation corresponding to the 
special symbol above are indexed by certain partitions of elements in \eqref{eq12} in pairs 
which leave one singleton, as described in \cite{Lu2}. Each pair contains exactly one entry 
in the first row and  exactly one entry in the second row of our special symbol. All other 
subquotients of the corresponding constructible representation are obtained by swapping entries in 
some of these pairs. This gives exactly $2^k$ subquotients. 

Consider the two constructible representations corresponding to the partitions
\begin{displaymath}
\{\gamma_1,\gamma_2\}\quad
\{\gamma_3,\gamma_4\}\quad\dots\quad
\{\gamma_{2k-1},\gamma_{2k}\}\quad \{\gamma_{2k+1}\}
\end{displaymath}
and
\begin{displaymath}
\{\gamma_1\}\quad 
\{\gamma_2,\gamma_3\}\quad\{\gamma_4,\gamma_5\}\quad\dots\quad
\{\gamma_{2k},\gamma_{2k+1}\}.
\end{displaymath}
From the above it is straightforward that the special representation is the only common
composition subquotient of these two constructible representations. This completes the proof
in type $B_n$.
\end{proof}

This completes the proof of Proposition~\ref{prop41}.
\end{proof}

\section{Special modules: further properties}\label{s7}

\subsection{Projection of the positive cone}\label{s7.1}

Let $\mathcal{L}$ be a left cell in $(\mathbf{n},\star)$. Consider the modules $C_{\mathcal{L}}$ and
$L_{\mathcal{L}}$. Let $P_{\mathcal{L}}$ denote the indecomposable projective cover of $L_{\mathcal{L}}$
in $A$-mod. Then $\dim \mathrm{Hom}_A(P_{\mathcal{L}},C_{\mathcal{L}})=1$ and we can denote by 
$V_{\mathcal{L}}$ the image in $C_{\mathcal{L}}$ of any non-zero homomorphism from $P_{\mathcal{L}}$.
The $A$-module $V_{\mathcal{L}}$ has simple top $L_{\mathcal{L}}$ by construction, and we denote by 
$K_{\mathcal{L}}$ the kernel of the canonical projection $V_{\mathcal{L}}\tto L_{\mathcal{L}}$.
Note that both $V_{\mathcal{L}}$ and $K_{\mathcal{L}}$ are submodules of $C_{\mathcal{L}}$.

For $\mathbf{c}\in(\mathbb{R}_{>0}\cap \Bbbk)^n$, consider the corresponding element $a(\mathbf{c})\in A$
as defined in \eqref{eq9}. Let $\lambda(\mathbf{c})$ denote the Perron-Frobenius eigenvalue of
$a(\mathbf{c})$ on $C_{\mathcal{L}}$. Let, further,  $v(\mathbf{c})\in C_{\mathcal{L}}$ be 
a Perron-Frobenius eigenvector for $a(\mathbf{c})$.

\begin{lemma}\label{lem52}
We have $V_{\mathcal{L}}=Av(\mathbf{c})$, in particular, the submodule $Av(\mathbf{c})$ of $C_{\mathcal{L}}$
does not depend on the choice of $\mathbf{c}$.
\end{lemma}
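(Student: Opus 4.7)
The plan is to prove both inclusions $Av(\mathbf{c})\subseteq V_{\mathcal{L}}$ and $V_{\mathcal{L}}\subseteq Av(\mathbf{c})$ by an eigenvalue argument, exploiting Corollary~\ref{cor3} (which says $[C_{\mathcal{L}}:L_{\mathcal{L}}]=1$) together with the defining property of a special subquotient. The preliminary observation I will use throughout is that, since $\lambda(\mathbf{c})$ has algebraic multiplicity $1$ as an eigenvalue of $a(\mathbf{c})$ on $C_{\mathcal{L}}$ by Theorem~\ref{thm2}\eqref{thm2.3}, and since it is an eigenvalue on $L_{\mathcal{L}}$ by the definition of a special subquotient, it is not an eigenvalue of $a(\mathbf{c})$ on any other composition factor of $C_{\mathcal{L}}$ (eigenvalues of an operator on a module are the union, with algebraic multiplicities, of the eigenvalues on the composition factors).

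For the first inclusion, I would look at the quotient $C_{\mathcal{L}}/V_{\mathcal{L}}$. Since the unique occurrence of $L_{\mathcal{L}}$ in $C_{\mathcal{L}}$ lies inside $V_{\mathcal{L}}$ (because $V_{\mathcal{L}}$ has simple top $L_{\mathcal{L}}$), every composition factor of $C_{\mathcal{L}}/V_{\mathcal{L}}$ is different from $L_{\mathcal{L}}$, so by the preliminary observation, $\lambda(\mathbf{c})$ is not an eigenvalue of $a(\mathbf{c})$ on $C_{\mathcal{L}}/V_{\mathcal{L}}$. The image of $v(\mathbf{c})$ under the quotient map is an eigenvector for $\lambda(\mathbf{c})$, hence must vanish, giving $v(\mathbf{c})\in V_{\mathcal{L}}$ and thus $Av(\mathbf{c})\subseteq V_{\mathcal{L}}$.

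For the reverse inclusion, I would show $v(\mathbf{c})\notin K_{\mathcal{L}}$ by the same reasoning applied to $K_{\mathcal{L}}$: the composition factors of $K_{\mathcal{L}}$ are those of $V_{\mathcal{L}}$ with the top copy of $L_{\mathcal{L}}$ removed, so none is isomorphic to $L_{\mathcal{L}}$, and hence $\lambda(\mathbf{c})$ is not an eigenvalue of $a(\mathbf{c})$ on $K_{\mathcal{L}}$. Therefore $v(\mathbf{c})$ cannot lie in $K_{\mathcal{L}}$. Since $V_{\mathcal{L}}$ has simple top $L_{\mathcal{L}}$, the submodule $K_{\mathcal{L}}$ is precisely the radical of $V_{\mathcal{L}}$, so it is the unique maximal submodule and contains every proper submodule. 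As $Av(\mathbf{c})\subseteq V_{\mathcal{L}}$ is not contained in $K_{\mathcal{L}}$, it cannot be proper, whence $Av(\mathbf{c})=V_{\mathcal{L}}$. The ``in particular'' statement is then automatic, since $V_{\mathcal{L}}$ is defined with no reference to $\mathbf{c}$.

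There is no serious obstacle here: the argument is essentially a bookkeeping exercise on eigenvalues versus composition factors. The only point that requires care is the standard but crucial fact that the eigenvalues of a single operator on a module, counted with algebraic multiplicity, are distributed across the composition factors of any Jordan--Hölder filtration; this is what links the Perron--Frobenius uniqueness (which is a statement about the whole matrix) to statements about individual simple subquotients.
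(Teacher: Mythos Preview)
Your proof is correct and follows essentially the same route as the paper's. Both arguments hinge on the multiplicity-one fact $[C_{\mathcal{L}}:L_{\mathcal{L}}]=1$ together with the observation that $V_{\mathcal{L}}$ has simple top $L_{\mathcal{L}}$, so any submodule of $V_{\mathcal{L}}$ containing $L_{\mathcal{L}}$ as a composition factor must equal $V_{\mathcal{L}}$. The paper is terser: it asserts $v(\mathbf{c})\in V_{\mathcal{L}}$ ``by construction'' and then notes that $L_{\mathcal{L}}$ has non-zero multiplicity in $Av(\mathbf{c})$, whereas you spell out both steps via the eigenvalue-distribution argument on composition factors; this makes your version more self-contained but otherwise identical in substance.
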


\begin{proof}
On the one hand, we have $v(\mathbf{c})\in V_{\mathcal{L}}$ by construction and hence
$Av(\mathbf{c})\subset V_{\mathcal{L}}$. On the other hand, $V_{\mathcal{L}}$ has simple top 
$L_{\mathcal{L}}$ and the latter simple has multiplicity one in $V_{\mathcal{L}}$. By construction, 
$L_{\mathcal{L}}$ also has a non-zero multiplicity in $Av(\mathbf{c})$. Therefore
$V_{\mathcal{L}}=Av(\mathbf{c})$ and the claim follows.
\end{proof}

Denote by $\mathbf{B}_{\mathcal{L}}^+$ the subset of $C_{\mathcal{L}}$ consisting of all possible
linear combinations of elements in $\mathbf{B}_{\mathcal{L}}$ with {\em non-negative} coefficients.

\begin{proposition}\label{prop51}
We have $K_{\mathcal{L}}\cap \mathbf{B}_{\mathcal{L}}^+=0$.
\end{proposition}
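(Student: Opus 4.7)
The plan is to argue by contradiction, using the asymptotic statement in Theorem~\ref{thm2}\eqref{thm2.6}. Suppose there exists a non-zero $w\in K_{\mathcal{L}}\cap \mathbf{B}_{\mathcal{L}}^+$. Fix any $\mathbf{c}\in(\mathbb{R}_{>0}\cap \Bbbk)^n$ and let $M$ be the matrix of $a(\mathbf{c})$ acting on $C_{\mathcal{L}}$ in the basis $\mathbf{B}_{\mathcal{L}}$. By Lemma~\ref{lem4}, $M$ has all positive entries, so Theorem~\ref{thm2} applies; let $\lambda=\lambda(\mathbf{c})$, let $v=v(\mathbf{c})$ be a Perron-Frobenius eigenvector with strictly positive coordinates in $\mathbf{B}_{\mathcal{L}}$, and let $\hat v$ be the corresponding left Perron-Frobenius eigenvector, also with strictly positive coordinates, normalized so that $\hat v^t v = 1$.

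The key computation is the following. Identify $w$ with its coordinate vector in $\mathbf{B}_{\mathcal{L}}$, which is non-zero and entry-wise non-negative. Then $\hat v^t w>0$, since $\hat v$ is strictly positive. By Theorem~\ref{thm2}\eqref{thm2.6},
\begin{displaymath}
\lim_{n\to\infty}\frac{a(\mathbf{c})^n w}{\lambda^n} \;=\; \lim_{n\to\infty}\frac{M^n}{\lambda^n}\,w \;=\; v\hat v^t w \;=\; (\hat v^t w)\,v.
\end{displaymath}
Since $w\in K_{\mathcal{L}}$ and $K_{\mathcal{L}}$ is an $A$-submodule, every term of the sequence $a(\mathbf{c})^n w/\lambda^n$ lies in $K_{\mathcal{L}}$; since $K_{\mathcal{L}}$ is a finite-dimensional subspace of $C_{\mathcal{L}}$, it is closed, and hence the limit $(\hat v^t w) v$ lies in $K_{\mathcal{L}}$. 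Dividing by the positive scalar $\hat v^t w$, we conclude that $v=v(\mathbf{c})\in K_{\mathcal{L}}$.

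Now invoke Lemma~\ref{lem52}: we have $V_{\mathcal{L}}=A v(\mathbf{c})$, so $V_{\mathcal{L}}\subseteq K_{\mathcal{L}}$. This contradicts the fact that $K_{\mathcal{L}}$ is the kernel of a surjection $V_{\mathcal{L}}\twoheadrightarrow L_{\mathcal{L}}$ onto a non-zero simple module, and hence is a proper submodule of $V_{\mathcal{L}}$. This contradiction proves $K_{\mathcal{L}}\cap \mathbf{B}_{\mathcal{L}}^+=0$.

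The only genuine point to verify carefully is the positivity $\hat v^t w>0$, which is where one uses both that $w$ is entry-wise non-negative and non-zero and that the left Perron-Frobenius eigenvector is entry-wise strictly positive (Theorem~\ref{thm2}\eqref{thm2.4}); everything else is routine, since the invariance of $K_{\mathcal{L}}$ under $a(\mathbf{c})$ is built into it being an $A$-submodule, and closedness is automatic in finite dimensions. I do not foresee a substantive obstacle beyond assembling these pieces.
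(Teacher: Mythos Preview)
Your argument is correct and takes a genuinely different route from the paper's. The paper argues by contradiction as well, but instead of invoking the limit in Theorem~\ref{thm2}\eqref{thm2.6}, it applies the Schauder fixed point theorem: it takes the convex hull $X_2$ of the unit vectors in $K_{\mathcal{L}}\cap\mathbf{B}_{\mathcal{L}}^+$, shows the normalized map $v\mapsto a(\mathbf{c})v/|a(\mathbf{c})v|$ sends $X_2$ to itself, and concludes that a fixed point would be a non-negative eigenvector distinct from the Perron--Frobenius eigenvector, contradicting Theorem~\ref{thm2}\eqref{thm2.5}. Your approach is more elementary in that it avoids an external fixed point theorem and instead uses part~\eqref{thm2.6} of the Perron--Frobenius statement already recorded in the paper; it also makes the mechanism transparent, since the limit literally forces $v(\mathbf{c})$ into $K_{\mathcal{L}}$. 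The paper's approach, by contrast, never needs the left eigenvector $\hat v$ or the asymptotic statement, relying only on the uniqueness clause~\eqref{thm2.5}.

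One small omission: the paper first reduces to $\Bbbk=\mathbb{R}$ (``all other cases follow from this one by restricting and extending scalars''), which is needed so that $\lambda$, $v$, $\hat v$, and the limiting process all live in the ambient field; your write-up implicitly works over $\mathbb{R}$ without saying so. This is harmless but worth stating.
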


\begin{proof}
We assume $\Bbbk=\mathbb{R}$, all other cases follow from this one by restricting and extending scalars. 
We have  $K_{\mathcal{L}}\cap \mathbf{B}_{\mathcal{L}}^+\neq \mathbf{B}_{\mathcal{L}}^+$ since any
$v(\mathbf{c})$ as above is in $\mathbf{B}_{\mathcal{L}}^+$ and, certainly, is not in
$K_{\mathcal{L}}$. Assume that $X:=K_{\mathcal{L}}\cap \mathbf{B}_{\mathcal{L}}^+$ contains some
non-zero $v$. Then $X$ contains all $\lambda v$, where $\lambda\in\mathbb{R}_{>0}$.

As $K_{\mathcal{L}}$ is a submodule, it follows that $X$ is invariant under the action of any 
$a(\mathbf{c})$ as above. Consider the inner product in $C_{\mathcal{L}}$ for which 
$\mathbf{B}_{\mathcal{L}}$ is an orthonormal basis. Let $X_1$ be the subset of $X$ consisting of
all elements of length one in $X$. Let $X_2$ be the convex hull of $X_1$. Clearly, $X_1$ is compact and
non-empty. Therefore $X_2$ is compact, convex and non-empty. Since $X_1$ does not contain zero,
$X_2$ does not contain zero either, by construction. Consider the  transformation
\begin{equation}\label{eq51}
v\mapsto \frac{a(\mathbf{c})\cdot v}{|a(\mathbf{c})\cdot v|}
\end{equation}
of $X_2$ which is well-defined and continuous as $|a(\mathbf{c})\cdot v|\neq 0$ since $1$ 
appears in  $a(\mathbf{c})$ with a non-zero coefficient. 
By the Schauder fixed point theorem, see \cite{Sch}, the transformation \eqref{eq51} 
must have a fixed point. Any such fixed
point is, by construction, an eigenvector of $a(\mathbf{c})$ lying in $\mathbf{B}_{\mathcal{L}}^+$ 
and different from the  Perron-Frobenius eigenvector (since it is in $K_{\mathcal{L}}$). 
This contradicts uniqueness of 
the Perron-Frobenius eigenvector in Theorem~\ref{thm2}\eqref{thm2.5}. Therefore $X_1=\varnothing$.
\end{proof}

\subsection{Special modules for semi-simple algebras}\label{s7.2}

\begin{proposition}\label{propn37}
Assume that $\Bbbk$ is algebraically closed and $A$ is semi-simple.

\begin{enumerate}[$($i$)$]
\item\label{propn37.1} Each two-sided cell for $A$ is idempotent.
\item\label{propn37.2} Let $\mathcal{L}$ be a left cell and $\mathcal{J}$ a two-sided cell containing
$\mathcal{L}$. Then the dimension of $L_{\mathcal{L}}$ equals the number of left cells in $\mathcal{J}$.
\item\label{propn37.3} Any simple subquotient of $C_{\mathcal{L}}$ different from $L_{\mathcal{L}}$
is not isomorphic to $L_{\mathcal{L}'}$ for any left cell $\mathcal{L}'$.
\end{enumerate}
\end{proposition}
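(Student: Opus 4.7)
The overall strategy is to exploit the semi-simplicity of $A$ to split the two-sided cell filtration of $A$ as a direct sum of two-sided ideals, and then to read off all three statements from this block decomposition together with the Perron-Frobenius input already available. More precisely, the subspaces $\tilde{I}_{\mathcal{J}}:=\mathrm{span}\{a_k\,:\,k>_J\mathcal{J}\}$ and $I_{\mathcal{J}}:=\mathrm{span}\{a_k\,:\,k\geq_J\mathcal{J}\}$ are two-sided ideals of $A$, by the obvious two-sided analogue of Proposition~\ref{prop1}. Because $A$ is semi-simple with $\Bbbk$ algebraically closed, the lattice of two-sided ideals of $A$ is Boolean (each is generated by a central idempotent), so each inclusion $\tilde{I}_{\mathcal{J}}\subseteq I_{\mathcal{J}}$ admits a two-sided ideal complement $B_{\mathcal{J}}$, yielding a decomposition $A=\bigoplus_{\mathcal{J}}B_{\mathcal{J}}$ and a bimodule identification $B_{\mathcal{J}}\cong I_{\mathcal{J}}/\tilde{I}_{\mathcal{J}}$ under which multiplication reads $\bar{a}_i\bar{a}_j=\sum_{k\in\mathcal{J}}\gamma_{i,j}^{(k)}\bar{a}_k$ for $i,j\in\mathcal{J}$.

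Part~\eqref{propn37.1} then follows immediately: $B_{\mathcal{J}}$ is a non-zero two-sided ideal of the semi-simple algebra $A$, hence is not nilpotent, so $B_{\mathcal{J}}^2\neq 0$, and the formula above forces $\gamma_{i,j}^{(k)}>0$ for some $i,j,k\in\mathcal{J}$, proving that $\mathcal{J}$ is idempotent.

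For part~\eqref{propn37.3}, let $\mathcal{S}_{\mathcal{J}}$ denote the set of isomorphism classes of simple $A$-modules occurring as summands of $B_{\mathcal{J}}$ viewed as a left $A$-module. The sets $\mathcal{S}_{\mathcal{J}}$ are pairwise disjoint, since the $B_{\mathcal{J}}$'s are direct sums of disjoint collections of blocks of $A$. Semi-simplicity also splits the left cell filtration of $B_{\mathcal{J}}$, giving $B_{\mathcal{J}}\cong\bigoplus_{\mathcal{L}\subseteq\mathcal{J}}C_{\mathcal{L}}$ as left modules, so every simple subquotient of $C_{\mathcal{L}}$ belongs to $\mathcal{S}_{\mathcal{J}}$. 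If $L_{\mathcal{L}'}$ is such a simple subquotient, then $L_{\mathcal{L}'}\in\mathcal{S}_{\mathcal{J}}\cap\mathcal{S}_{\mathcal{J}'}$, which forces $\mathcal{J}=\mathcal{J}'$, and Theorem~\ref{thm7} then gives $L_{\mathcal{L}'}\cong L_{\mathcal{L}}$. Part~\eqref{propn37.2} is then a counting consequence: in semi-simple $A$ the multiplicity of $L_{\mathcal{L}}$ in ${}_AA\cong\bigoplus_{\mathcal{M}}C_{\mathcal{M}}$ equals $\dim L_{\mathcal{L}}$, and by part~\eqref{propn37.3} together with Theorem~\ref{thm7} and Corollary~\ref{cor3}, each left cell $\mathcal{M}\subseteq\mathcal{J}$ contributes exactly $1$ to this multiplicity while all other $\mathcal{M}$ contribute $0$.

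The main obstacle is the initial bimodule-level splitting of the two-sided cell filtration. This relies crucially on $A$ being semi-simple with $\Bbbk$ algebraically closed, so that every two-sided ideal of $A$ is a direct summand of $A$ as an $A$-bimodule via a central idempotent. Once this splitting is in place, the argument reduces to standard block theory, combined with the multiplicity-one property from Corollary~\ref{cor3} and the cell-invariance from Theorem~\ref{thm7}.
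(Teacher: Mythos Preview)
Your proof is correct and follows essentially the same strategy as the paper: both exploit the filtration of ${}_AA$ by the two-sided ideals $I_i$ associated to two-sided cells, use semi-simplicity to see that each simple module is concentrated in a single layer $I_i/I_{i-1}\cong\bigoplus_{\mathcal{L}\subset\mathcal{J}_i}C_{\mathcal{L}}$, and then read off all three claims from this together with Corollary~\ref{cor3} and Theorem~\ref{thm7}. The only difference is cosmetic: you explicitly split the filtration via central idempotents into the bimodule summands $B_{\mathcal{J}}$, whereas the paper works directly with the subquotients $I_i/I_{i-1}$; for part~\eqref{propn37.1} the paper passes to a quotient to make $\mathcal{J}$ maximal rather than working inside $B_{\mathcal{J}}$, but the nilpotency argument is identical.
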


\begin{proof}
Let $\mathcal{J}$ be a two-sided cell in $A$. As all quotients of semi-simple algebras are semi-simple,
by taking an appropriate quotient of $A$ we may assume that $\mathcal{J}$ is maximal with respect to 
$\leq_J$. If $\mathcal{J}$ is not idempotent, then the linear span of all $a_i$, where $i\in \mathcal{J}$,
is a nilpotent ideal of $A$. This contradicts semi-simplicity of $A$ and proves claim~\eqref{propn37.1}.

Fix an ordering $\mathcal{J}_1,\mathcal{J}_2,\dots,\mathcal{J}_k$ of two-sided cells such that
$i<j$ implies $\mathcal{J}_i\not\geq_J\mathcal{J}_j$.
For $i=1,2,\dots,k$, denote by $I_i$ the linear span of all $a_j$, where $a_j\in \mathcal{J}_s$ and $s\leq i$.
Then
\begin{equation}\label{eqn9}
0=I_0\subset I_1\subset \dots\subset I_k=A 
\end{equation}
is a filtration of $A$ by two-sided ideals. As $A$ is semi-simple, each simple $A$-module $L$ appears in
${}_AA$ with multiplicity $\dim(L)$. As \eqref{eqn9} is a filtration by two-sided ideals,
there is $i\in\{1,2,\dots,k\}$ such that 
$L$ appears with multiplicity $\dim(L)$ in $I_i/I_{i-1}$. At the same time, the $A$-module
$I_i/I_{i-1}$ is isomorphic to the direct sum of $C_{\mathcal{L}''}$, where $\mathcal{L}''$ runs through the
set of all left cells in $\mathcal{J}_i$. This implies claim~\eqref{propn37.3}. Claim~\eqref{propn37.2}
now follows from the observation that $L_{\mathcal{L}}$ appears exactly once in each 
$C_{\mathcal{L}'}$ if $\mathcal{L}$ and $\mathcal{L}'$ belong to the same two-sided cell.
\end{proof}

\section{The apex}\label{s8}

\subsection{The apex of a cell module}\label{s8.1}

Given a left cell $\mathcal{L}$, consider the set $\mathcal{X}_{\mathcal{L}}$ of all two-sided cells
$\mathcal{J}$ in $(\mathbf{n},\star)$ for which there exists $i\in \mathcal{J}$ with the property that 
$a_i\cdot C_{\mathcal{L}}\neq 0$. The set $\mathcal{X}_{\mathcal{L}}$ is partially 
ordered with respect to $\leq_J$.

\begin{proposition}\label{prop1-1} 
Let $\mathcal{L}$ be a left cell in $(\mathbf{n},\star)$.
\begin{enumerate}[$($i$)$]
\item\label{prop1-1.1} The set $\mathcal{X}_{\mathcal{L}}$ contains a maximum element,
denoted $\mathcal{J}(\mathcal{L})$.
\item\label{prop1-1.2} The two-sided cell  $\mathcal{J}(\mathcal{L})$ is idempotent.
\item\label{prop1-1.3} For any $i\leq_J \mathcal{J}(\mathcal{L})$, we have $a_i\cdot C_{\mathcal{L}}\neq 0$.
\end{enumerate}
\end{proposition}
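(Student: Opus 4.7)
The plan is to derive all three parts through an ideal-theoretic argument against the simple special subquotient $L_{\mathcal{L}}$, after first reducing the question from $C_{\mathcal{L}}$ to $L_{\mathcal{L}}$. The pivotal preliminary step is the equivalence $a_i\cdot C_{\mathcal{L}}\neq 0$ if and only if $a_i\cdot L_{\mathcal{L}}\neq 0$. Indeed, pick a Perron-Frobenius element $a(\mathbf{c})$ with positive eigenvector $v\in C_{\mathcal{L}}$ (Lemma~\ref{lem4}); by Lemma~\ref{lem52}, $v$ lies in $V_{\mathcal{L}}$. Because $a_i$ acts by a non-negative matrix in the basis $\mathbf{B}_{\mathcal{L}}$, the vector $a_i v$ vanishes iff that matrix is zero iff $a_i\cdot C_{\mathcal{L}} = 0$. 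When $a_i v\neq 0$, it is a non-zero element of $\mathbf{B}_{\mathcal{L}}^+\cap V_{\mathcal{L}}$, so Proposition~\ref{prop51} gives $a_i v\notin K_{\mathcal{L}}$, and hence its image in $L_{\mathcal{L}} = V_{\mathcal{L}}/K_{\mathcal{L}}$ is non-zero. The converse is immediate since $L_{\mathcal{L}}$ is a subquotient of $C_{\mathcal{L}}$.

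For each two-sided cell $\mathcal{J}$, positivity implies that $I_{\geq\mathcal{J}} := \Bbbk\text{-span}\{a_k : k\geq_J\mathcal{J}\}$ is a two-sided ideal of $A$, since the support of any $a_m a_k$ lies in $\{\ell : \ell\geq_L k\}\subseteq\{\ell : \ell\geq_J\mathcal{J}\}$; likewise, because both $I_{\geq\mathcal{J}_1}$ and $I_{\geq\mathcal{J}_2}$ are spanned by subsets of the common basis $\mathbf{B}$, the intersection $I_{\geq\mathcal{J}_1}\cap I_{\geq\mathcal{J}_2}$ is $\Bbbk\text{-span}\{a_\ell : \ell\geq_J\mathcal{J}_1\text{ and }\ell\geq_J\mathcal{J}_2\}$. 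For each $\mathcal{J}\in\mathcal{X}_{\mathcal{L}}$, the reduction above furnishes $k\in\mathcal{J}$ with $a_k\cdot L_{\mathcal{L}}\neq 0$, so $I_{\geq\mathcal{J}}\cdot L_{\mathcal{L}}$ is a non-zero submodule of the simple $L_{\mathcal{L}}$ and hence equals $L_{\mathcal{L}}$. For part (i), suppose $\mathcal{J}_1\neq\mathcal{J}_2$ are two maxima of $\mathcal{X}_{\mathcal{L}}$, necessarily $\leq_J$-incomparable; then every cell appearing in $I_{\geq\mathcal{J}_1}\cap I_{\geq\mathcal{J}_2}$ is strictly above both $\mathcal{J}_i$, hence outside $\mathcal{X}_{\mathcal{L}}$, hence annihilates $L_{\mathcal{L}}$. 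Combining this with $I_{\geq\mathcal{J}_1}\cdot I_{\geq\mathcal{J}_2}\subseteq I_{\geq\mathcal{J}_1}\cap I_{\geq\mathcal{J}_2}$ yields the contradiction
\begin{displaymath}
L_{\mathcal{L}} = I_{\geq\mathcal{J}_1}(I_{\geq\mathcal{J}_2}L_{\mathcal{L}}) = I_{\geq\mathcal{J}_1}I_{\geq\mathcal{J}_2}L_{\mathcal{L}} = 0.
\end{displaymath}
For part (ii), if $\mathcal{J}(\mathcal{L})$ were not idempotent then for any $k,k'\in\mathcal{J}(\mathcal{L})$ every element of $k\star k'$ would lie strictly above $\mathcal{J}(\mathcal{L})$, so $I_{\geq\mathcal{J}(\mathcal{L})}^{\,2}\subseteq I_{>\mathcal{J}(\mathcal{L})}\subseteq\mathrm{Ann}(L_{\mathcal{L}})$ by maximality, and the analogous sandwich $L_{\mathcal{L}} = I_{\geq\mathcal{J}(\mathcal{L})}^{\,2}L_{\mathcal{L}} = 0$ produces a contradiction.

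Part (iii) is now immediate from positivity: given $i\leq_J\mathcal{J}(\mathcal{L})$, pick $j\in\mathcal{J}(\mathcal{L})$ with $a_j\cdot C_{\mathcal{L}}\neq 0$ (which exists by (i)) and write $j\in s_1\star i\star s_2$; the operator of $a_{s_1}a_i a_{s_2}$ on $C_{\mathcal{L}}$ is a non-negative combination of non-negative matrices containing a positive multiple of the non-zero matrix of $a_j$, so no cancellation is possible and $a_i$ itself must act non-trivially on $C_{\mathcal{L}}$. The principal technical obstacle is the reduction in the first paragraph: showing that an element $a_i$ acts non-trivially on $C_{\mathcal{L}}$ if and only if it acts non-trivially on $L_{\mathcal{L}}$, via Proposition~\ref{prop51}, is what unlocks the simplicity-based dichotomy $I\cdot L\in\{0,L\}$; once this bridge is in place, the rest amounts to elementary bookkeeping with the ideals $I_{\geq\mathcal{J}}$ and the two-sided preorder, with positivity ensuring the absence of cancellation.
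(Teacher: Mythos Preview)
Your proof is correct, but it follows a genuinely different route from the paper's. The paper argues entirely on the cell module $C_{\mathcal{L}}$: for claim~\eqref{prop1-1.1} it assumes two maximal cells $\mathcal{J},\mathcal{J}'\in\mathcal{X}_{\mathcal{L}}$ and, by chasing products of basis elements and using positivity and transitivity of $C_{\mathcal{L}}$, manufactures a contradiction by hand; claim~\eqref{prop1-1.2} is then a variant of the same computation with $\mathcal{J}=\mathcal{J}'$. Your approach instead begins with the reduction $a_i\cdot C_{\mathcal{L}}\neq 0 \Leftrightarrow a_i\cdot L_{\mathcal{L}}\neq 0$ (via Proposition~\ref{prop51} and Lemma~\ref{lem52}), and then exploits simplicity of $L_{\mathcal{L}}$ to replace the explicit positivity bookkeeping by the clean dichotomy $I_{\geq\mathcal{J}}\cdot L_{\mathcal{L}}\in\{0,L_{\mathcal{L}}\}$. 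This makes parts~\eqref{prop1-1.1} and~\eqref{prop1-1.2} essentially formal ideal-theoretic statements, while your argument for~\eqref{prop1-1.3} coincides with the paper's. The trade-off: the paper's proof is self-contained within the combinatorics of $C_{\mathcal{L}}$ and does not invoke the Perron--Frobenius machinery of Section~\ref{s7}, whereas your argument is shorter and more conceptual but imports Proposition~\ref{prop51}. Interestingly, the paper proves your key reduction later, as Corollary~\ref{cor32}, \emph{using} Proposition~\ref{prop1-1}; you have effectively shown that the dependence can be reversed.
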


\begin{proof}
Let $\mathcal{J}$ be a maximal element in $\mathcal{X}_{\mathcal{L}}$ and $i\leq_J \mathcal{J}$.
We first show the inequality  $a_i\cdot C_{\mathcal{L}}\neq 0$. In particular, given claim~\eqref{prop1-1.1}, this
would imply claim~\eqref{prop1-1.3}. Assume that $a_i\cdot C_{\mathcal{L}}=0$ for some 
$i\leq_J \mathcal{J}$. Then $Aa_iA\cdot C_{\mathcal{L}}=0$. 

As $\mathcal{J}\in\mathcal{X}_{\mathcal{L}}$, there is $j\in \mathcal{J}$ such that 
$a_j\cdot C_{\mathcal{L}}\neq 0$. Let $s,t\in \mathbf{n}$ be such that 
$a_j$ appears in $a_sa_ia_t$ with a non-zero coefficient. Note that the matrix of the action of
each $a_q$, where $q\in\mathbf{n}$, on $C_{\mathcal{L}}$ in the basis $\mathbf{B}_{\mathcal{L}}$
has only non-negative coefficients. Putting this together with the fact that 
$a_sa_ia_t$ is a linear combination of the $a_q$'s with non-negative coefficients, we have that 
$a_sa_ia_t\cdot C_{\mathcal{L}}=0$ implies $a_j\cdot C_{\mathcal{L}}=0$, a contradiction.
This shows that $a_i\cdot C_{\mathcal{L}}\neq 0$.
 
Assume now that $\mathcal{J}$ and $\mathcal{J}'$ are two different maximal elements in $\mathcal{X}_{\mathcal{L}}$.
Let $i\in \mathcal{J}$ and $j\in \mathcal{J}'$. Then the product $a_ia_j$ is a linear combination of 
$a_q$, where $q>_J\mathcal{J}$. Hence $a_ia_j\cdot C_{\mathcal{L}}=0$. On the other hand, let 
$B$ be the subspace of $A$ spanned by all $a_j$, where $j\in \mathcal{J}'$. Then each element of 
$AB$ is a linear combination of $a_q$, where $q\geq _J \mathcal{J}'$. Therefore
$AB\cdot C_{\mathcal{L}}=B\cdot C_{\mathcal{L}}$. Now, from $a_ia_j\cdot C_{\mathcal{L}}=0$
(for all $i\in \mathcal{J}$ and $j\in \mathcal{J}'$), we have
$a_iB\cdot C_{\mathcal{L}}=0$, for all $i\in  \mathcal{J}$. Below we show that this leads to a
contradiction.

Consider some non-zero $v= a_j\cdot a_p\in C_{\mathcal{L}}$, where $j\in \mathcal{J}'$ and $p\in \mathcal{L}$ and
write it as a linear combination of elements in $\mathbf{B}_{\mathcal{L}}$. Assume that some $a_s$ 
appears in this linear combination with a non-zero coefficient. Consider now  $a_t\cdot v$, for all 
$t\in \mathbf{n}$ for which  $a_t\cdot v\neq 0$. Positivity of the basis $\mathbf{B}$ 
and the fact that $\mathcal{L}$ is a left cell imply that, for each $q\in \mathcal{L}$, there
is some $t\in\mathbf{n}$ such that $a_q$ appears with a non-zero coefficient in $a_t\cdot v$.

Take now any $i\in\mathcal{J}$. Then, because of the first claim, there must exist $q\in \mathcal{L}$ such that 
$a_i\cdot a_q\neq 0$ in $C_{\mathcal{L}}$. 
Positivity of the basis $\mathbf{B}$ now implies $a_ia_t\cdot v\neq 0$, a contradiction.
This proves claim~\eqref{prop1-1.1} and hence also claim~\eqref{prop1-1.3} because of the argument above.

Claim~\eqref{prop1-1.2} is proved by a slight modification of the above argument used to prove
claim~\eqref{prop1-1.1}. In more details, in the above argument take $\mathcal{J}=\mathcal{J}'=
\mathcal{J}(\mathcal{L})$  and assume that $a_ia_j\cdot C_{\mathcal{L}}=0$, 
for all $i,j\in \mathcal{J}(\mathcal{L})$. Following the argument all the way through, we get a
contradiction. This completes the proof.
\end{proof}

The two-sided cell $\mathcal{J}(\mathcal{L})$ is called {\em the apex} of $C_{\mathcal{L}}$. 
In the case of semigroups, the notion of the apex of a simple module is standard, see
\cite{Mu,GMS}. In our case, however, we do not know whether one could define a sensible notion of
apex for all simple $A$-modules. Our arguments in Proposition~\ref{prop1-1} rely heavily on the
fact that $C_{\mathcal{L}}$ has a positive basis, that is a basis in which the action of all $a_i$
is given by a matrix with non-negative coefficients.
In the setup of $2$-categories, the notion of apex is discussed in \cite[Section~4]{CM}.

\subsection{$\mathcal{J}$-invariance}\label{s8.2}

\begin{proposition}\label{prop31}
Let $\mathcal{L}$ and  $\mathcal{L}'$ be two left cells in $(\mathbf{n},\star)$ which belong to the
same two-sided cell. Then $\mathcal{J}(\mathcal{L})=\mathcal{J}(\mathcal{L}')$.
\end{proposition}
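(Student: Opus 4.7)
The plan is to reduce to the case where $\mathcal{L}'$ is minimal among left cells in the common two-sided cell $\mathcal{J}$ with respect to $\leq_L$: once we establish $\mathcal{J}(\mathcal{L})=\mathcal{J}(\mathcal{L}')$ in this situation, the general case follows by comparing any two cells in $\mathcal{J}$ through a common minimal one (which exists since $\mathcal{J}$ is finite). So assume $\mathcal{L}'$ is minimal. The proof of Theorem~\ref{thm7} then supplies a non-zero $A$-module homomorphism $\varphi:C_{\mathcal{L}}\to C_{\mathcal{L}'}$ whose crucial feature is that $\varphi(u)\neq 0$ whenever $u\in C_{\mathcal{L}}$ has all strictly positive coordinates in the basis $\mathbf{B}_{\mathcal{L}}$. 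Applying $\varphi$ to the Perron-Frobenius eigenvector $v$ of $a:=a_{1}+\dots+a_{n}$ acting on $C_{\mathcal{L}}$, one obtains a non-zero non-negative eigenvector $\varphi(v)$ of $a$ on $C_{\mathcal{L}'}$, so Theorem~\ref{thm2}\eqref{thm2.5} forces $\varphi(v)=c\,v'$ for some $c>0$, where $v'$ is the Perron-Frobenius eigenvector of $a$ on $C_{\mathcal{L}'}$.

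By Proposition~\ref{prop1-1}, the apices $\mathcal{J}(\mathcal{L})$ and $\mathcal{J}(\mathcal{L}')$ are the maxima of the $\leq_J$-downward closed sets $\mathcal{X}_{\mathcal{L}}$ and $\mathcal{X}_{\mathcal{L}'}$, so it suffices to show that $a_i\cdot C_{\mathcal{L}}=0$ if and only if $a_i\cdot C_{\mathcal{L}'}=0$ for every $i\in\mathbf{n}$. The decisive observation is that, since $v$ has strictly positive coordinates and the matrix of $a_i$ on $C_{\mathcal{L}}$ has non-negative entries, one has $a_i\cdot C_{\mathcal{L}}=0\iff a_{i}v=0$, and analogously on $C_{\mathcal{L}'}$ with $v'$. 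The implication $a_{i}v=0\Rightarrow a_{i}v'=0$ is then immediate from the $A$-linearity of $\varphi$ together with $\varphi(v)=cv'$, since $c\,a_{i}v'=a_{i}\varphi(v)=\varphi(a_{i}v)=0$ forces $a_{i}v'=0$.

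The harder direction, that $a_{i}v\neq 0$ implies $a_{i}v'\neq 0$, is the main obstacle, because $a_{i}v$ is only non-negative and may have zero coordinates, so the positivity property of $\varphi$ does not apply directly. The plan is to sharpen $a_{i}v$ by one more application of $a$: since the matrix of $a$ on $C_{\mathcal{L}}$ has all strictly positive entries by Lemma~\ref{lem4}, the non-zero non-negative vector $a_{i}v$ is sent by $a$ to a vector $a\cdot a_{i}v$ with all coordinates strictly positive, and hence $\varphi(a\cdot a_{i}v)\neq 0$ by the positivity property of $\varphi$. On the other hand, $\varphi(a\cdot a_{i}v)=c\cdot a\cdot a_{i}v'$, which would vanish if $a_{i}v'=0$. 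This contradiction yields $a_{i}v'\neq 0$ and completes the proof. In short, the one-sided nature of Theorem~\ref{thm7}'s homomorphism is circumvented by using the Perron-Frobenius element $a$ to promote non-negative vectors to strictly positive ones before passing through $\varphi$.
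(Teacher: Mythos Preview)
Your proof is correct. The setup, the reduction to a minimal $\mathcal{L}'$, the use of the homomorphism $\varphi$ from Theorem~\ref{thm7}, and the ``easy'' direction ($a_i v=0\Rightarrow a_i v'=0$) all mirror the paper's argument exactly.

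Where you genuinely diverge from the paper is in the ``hard'' direction. The paper does not prove the full biconditional $a_i C_{\mathcal{L}}=0\iff a_i C_{\mathcal{L}'}=0$. Instead, it selects a left cell $\mathcal{L}_1\subset\mathcal{J}(\mathcal{L})$ that is \emph{maximal} with respect to $\leq_L$, sets $a=\sum_{i\in\mathcal{L}_1}a_i$, and then uses a combinatorial invariance argument (the span of the support of $aa_j$ is an $A$-submodule by maximality of $\mathcal{L}_1$, hence must be all of $C_{\mathcal{L}}$) to conclude that $av_{\mathbf{c}}$ has strictly positive coordinates, whence $\varphi(av_{\mathbf{c}})\neq 0$ and $\mathcal{J}(\mathcal{L})\leq_J\mathcal{J}(\mathcal{L}')$. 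Your device is different and more economical: you apply the full Perron--Frobenius element $a=a_1+\dots+a_n$ to the non-zero non-negative vector $a_i v$, exploiting that a strictly positive matrix sends any non-zero non-negative vector to a strictly positive one, and then push through $\varphi$. This avoids the maximal-$\mathcal{L}_1$ combinatorics entirely and yields the stronger conclusion that the annihilation patterns of $C_{\mathcal{L}}$ and $C_{\mathcal{L}'}$ coincide elementwise. The paper's route, on the other hand, singles out an explicit element of $\mathcal{J}(\mathcal{L})$ acting non-trivially on $C_{\mathcal{L}'}$, which is slightly more informative in that specific respect.
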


\begin{proof}
We use the same trick as in the proof of Theorem~\ref{thm7}. Let $\mathcal{I}$ be the two-sided cell
containing both $\mathcal{L}$ and  $\mathcal{L}'$. Without loss of generality, we may 
assume that $\mathcal{L}'$ is minimal, with respect to $\leq_L$, in the set of all left cells
contained in $\mathcal{I}$. Let $\varphi:C_{\mathcal{L}}\to C_{\mathcal{L}'}$ be the homomorphism
constructed in the proof of Theorem~\ref{thm7}. 

Fix some $\mathbf{c}\in\mathbb{R}^n_{>0}$ and let $v_{\mathbf{c}}\in C_{\mathcal{L}}$ and
$v'_{\mathbf{c}}\in C_{\mathcal{L}'}$ be Perron-Frobenius eigenvectors for $a(\mathbf{c})$. We may
assume that $\varphi(v_{\mathbf{c}})=v'_{\mathbf{c}}$. Note that all elements in $\mathbf{B}_{\mathcal{L}}$
appear in the expression of $v_{\mathbf{c}}$ with positive coefficients and similarly for 
$\mathbf{B}_{\mathcal{L}'}$ and $v'_{\mathbf{c}}$. Because of the positivity property of the basis 
$\mathbf{B}$, we see that an element $a_i$ annihilates $C_{\mathcal{L}}$ if and only if it annihilates
$v_{\mathbf{c}}$. Similarly, $a_i$ annihilates $C_{\mathcal{L}'}$ if and only if it annihilates
$v'_{\mathbf{c}}$. This implies $\mathcal{J}(\mathcal{L}')\leq_J\mathcal{J}(\mathcal{L})$.

To prove equality, let $\mathcal{L}_1$ be a left cell in $\mathcal{J}(\mathcal{L})$ which is maximal
with respect to $\leq_L$. Consider the element 
\begin{displaymath}
a=\sum_{i\in \mathcal{L}_1}a_i. 
\end{displaymath}
Let $j\in\mathcal{L}$ be such that $a_ia_j\neq 0$ in $C_{\mathcal{L}}$ 
for some (and hence for all) $i\in \mathcal{L}_1$. Consider the non-zero 
element $aa_j\in C_{\mathcal{L}}$. The latter element is a linear combination 
of elements in $\mathbf{B}_{\mathcal{L}}$ with 
non-negative coefficients. Let $X\subset \mathcal{L}$ be the set of all indexes for which the corresponding basis
vectors appear in $aa_j$ with positive coefficients. Because of the maximality of 
$\mathcal{L}_1$  with respect to $\leq_L$, the linear combination of all
$a_x$, where $x\in X$, is $A$-invariant.  Now, the fact that $\mathcal{L}$ is a left cell, 
implies $X=\mathcal{L}$. Consequently, because of the positivity 
property of the basis  $\mathbf{B}$, we have $\varphi(av_{\mathbf{c}})\neq 0$.
This means that $a$ does not annihilate $C_{\mathcal{L}'}$ and thus implies 
$\mathcal{J}(\mathcal{L}')=\mathcal{J}(\mathcal{L})$.
\end{proof}

Because of Proposition~\ref{prop31}, we may define the {\em apex} $\mathcal{J}(\mathcal{I})$,
for any two-sided cell $\mathcal{I}$, via $\mathcal{J}(\mathcal{I}):=\mathcal{J}(\mathcal{L})$,
where $\mathcal{L}$ is a left cell in $\mathcal{I}$.

\subsection{The apex and special modules}\label{s8.3}

\begin{corollary}\label{cor32}
Let $\mathcal{L}$ be a left cell in $(\mathbf{n},\star)$. Then, for $i\in\mathbf{n}$, the element
$a_i$ does not annihilate $L_{\mathcal{L}}$ if and only if $i\leq_J \mathcal{J}(\mathcal{L})$.
\end{corollary}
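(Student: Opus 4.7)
The plan is to deduce both directions from the already-established infrastructure: Proposition~\ref{prop1-1} handles what happens on $C_{\mathcal{L}}$, and Proposition~\ref{prop51} lets us transfer non-vanishing from $C_{\mathcal{L}}$ down to the special subquotient $L_{\mathcal{L}}$.

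First, I want to record the equivalence $a_i\cdot C_{\mathcal{L}}\neq 0 \Leftrightarrow i\leq_J \mathcal{J}(\mathcal{L})$. The direction ``$\Leftarrow$'' is exactly Proposition~\ref{prop1-1}\eqref{prop1-1.3}. For ``$\Rightarrow$'', if $a_i\cdot C_{\mathcal{L}}\neq 0$ then the two-sided cell of $i$ belongs to $\mathcal{X}_{\mathcal{L}}$, hence is $\leq_J \mathcal{J}(\mathcal{L})$ by maximality of $\mathcal{J}(\mathcal{L})$ in $\mathcal{X}_{\mathcal{L}}$ (Proposition~\ref{prop1-1}\eqref{prop1-1.1}). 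With this in hand, the forward direction of the corollary is immediate: if $a_i$ does not annihilate $L_{\mathcal{L}}$, then $a_i$ does not annihilate $C_{\mathcal{L}}$ (since $L_{\mathcal{L}}$ is a subquotient), so $i\leq_J \mathcal{J}(\mathcal{L})$.

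For the reverse direction, fix $\mathbf{c}\in (\mathbb{R}_{>0}\cap\Bbbk)^n$ and let $v\in C_{\mathcal{L}}$ be a Perron--Frobenius eigenvector for $a(\mathbf{c})$ on $C_{\mathcal{L}}$. By Theorem~\ref{thm2}\eqref{thm2.4} combined with the proof of Lemma~\ref{lem4}, all coefficients of $v$ in the basis $\mathbf{B}_{\mathcal{L}}$ are strictly positive. Suppose $i\leq_J \mathcal{J}(\mathcal{L})$; by Proposition~\ref{prop1-1}\eqref{prop1-1.3} there exists some $j\in\mathcal{L}$ with $a_i\cdot a_j\neq 0$ in $C_{\mathcal{L}}$. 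Since each product $a_i\cdot a_k$, for $k\in\mathcal{L}$, is a non-negative linear combination of the elements of $\mathbf{B}_{\mathcal{L}}$, positivity rules out cancellation, and therefore $a_i\cdot v\neq 0$, with $a_i\cdot v\in\mathbf{B}_{\mathcal{L}}^+$.

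It remains to push this non-vanishing into $L_{\mathcal{L}}$. By Lemma~\ref{lem52} we have $v\in V_{\mathcal{L}}$, so $a_i\cdot v\in V_{\mathcal{L}}$. As $a_i\cdot v$ is a non-zero element of $\mathbf{B}_{\mathcal{L}}^+$, Proposition~\ref{prop51} gives $a_i\cdot v\notin K_{\mathcal{L}}$. Hence the image of $a_i\cdot v$ in $L_{\mathcal{L}}=V_{\mathcal{L}}/K_{\mathcal{L}}$ is non-zero, which proves $a_i\cdot L_{\mathcal{L}}\neq 0$ and completes the proof. The main (minor) obstacle is just the bookkeeping around positivity preventing cancellation in $a_i\cdot v$; the conceptual content is entirely carried by Proposition~\ref{prop51}, which is precisely what makes the special subquotient visible inside the ``positive cone'' of $C_{\mathcal{L}}$.
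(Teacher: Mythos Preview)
Your proof is correct and follows essentially the same route as the paper's: both directions rely on the equivalence $a_i\cdot C_{\mathcal{L}}\neq 0 \Leftrightarrow i\leq_J \mathcal{J}(\mathcal{L})$ from Proposition~\ref{prop1-1}, and the non-trivial direction is handled by applying $a_i$ to a Perron--Frobenius eigenvector $v$, observing $a_i v\in V_{\mathcal{L}}\cap\mathbf{B}_{\mathcal{L}}^+\setminus\{0\}$, and then invoking Proposition~\ref{prop51} to conclude $a_i v\notin K_{\mathcal{L}}$. Your write-up is slightly more explicit about the ``$\Rightarrow$'' direction of the $C_{\mathcal{L}}$-level equivalence, but otherwise matches the paper.
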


\begin{proof}
If $i\not\leq_J \mathcal{J}(\mathcal{L})$, then, by construction, $a_i$ annihilates $C_{\mathcal{L}}$
and hence also $L_{\mathcal{L}}$. 

If $i\leq_J \mathcal{J}(\mathcal{L})$, then $a_i$ 
does not annihilate $C_{\mathcal{L}}$. Consider some $\mathbf{c}\in(\mathbb{R}_{>0}\cap \Bbbk)^n$
and the corresponding $a(\mathbf{c})$ as in \eqref{eq9}. Let $v(\mathbf{c})$ be a Perron-Frobenius
eigenvector for $a(\mathbf{c})$ in $C_{\mathcal{L}}$. Then $v(\mathbf{c})$ is a linear combination of
elements in $\mathbf{B}_{\mathcal{L}}$ with positive coefficients. As $A$ is positively based and
$i\leq_J \mathcal{J}(\mathcal{L})$, the element $a_i v(\mathbf{c})$ is a non-zero linear combination of 
elements in $\mathbf{B}_{\mathcal{L}}$ with non-negative coefficients. By construction,
$a_i v(\mathbf{c})\in Av(\mathbf{c})=V_{\mathcal{L}}$. Now, that the image of 
$a_i v(\mathbf{c})$ in $L_{\mathcal{L}}$ is non-zero, follows from Proposition~\ref{prop51}.
\end{proof}

\subsection{The apex for idempotent $\mathcal{J}$-cells}\label{s8.4}

\begin{proposition}\label{propnn23}
For an idempotent two-sided cell $\mathcal{I}$, we have $\mathcal{J}(\mathcal{I})=\mathcal{I}$.
\end{proposition}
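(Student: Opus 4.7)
The plan is to prove both inequalities $\mathcal{J}(\mathcal{I})\leq_J\mathcal{I}$ and $\mathcal{I}\leq_J\mathcal{J}(\mathcal{I})$. Throughout, by Proposition~\ref{prop31} we may write $\mathcal{J}(\mathcal{I})=\mathcal{J}(\mathcal{L})$ for any left cell $\mathcal{L}\subset\mathcal{I}$, the choice being irrelevant.

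The inequality $\mathcal{J}(\mathcal{I})\leq_J\mathcal{I}$ is immediate from the definitions. For any $i\in\mathcal{J}(\mathcal{L})$, the non-vanishing $a_i\cdot C_{\mathcal{L}}\neq 0$ supplies $j,k\in\mathcal{L}\subset\mathcal{I}$ with $\gamma_{i,j}^{(k)}>0$, whence $k\in i\star j$ gives $i\leq_J k\in\mathcal{I}$.

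For the reverse inequality, the strategy is to exhibit an idempotent triple $(i,j,k)\in\mathcal{I}^3$ satisfying the additional constraint $j\sim_L k$. Given such a triple, set $\mathcal{L}$ to be the common left cell of $j$ and $k$; then $a_j$ and $a_k$ both belong to $\mathbf{B}_{\mathcal{L}}$, and the positive-coefficient term $a_k$ in $a_i\cdot a_j$ survives in the quotient $C_{\mathcal{L}}$, so $a_i\cdot C_{\mathcal{L}}\neq 0$ and hence $\mathcal{I}\in\mathcal{X}_{\mathcal{L}}$, yielding $\mathcal{I}\leq_J\mathcal{J}(\mathcal{L})$.

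The remaining task is to construct such a triple, which I would do via a maximality argument. Let $\Sigma$ denote the set of left cells of $\mathcal{I}$ that arise as the left cell of $k$ in some idempotent triple $(i,j,k)\in\mathcal{I}^3$; the set $\Sigma$ is finite and, by idempotency of $\mathcal{I}$, non-empty. Pick $\mathcal{L}^*\in\Sigma$ maximal with respect to $\leq_L$, and fix a witnessing triple $(i_0,j_0,k_0)$ with $k_0\in\mathcal{L}^*$. The plan is then to shift $k_0$ into the middle position: if one can find $i_1\in\mathcal{I}$ such that $a_{i_1}\cdot a_{k_0}$ has some component $a_{k_1}$ with $k_1\in\mathcal{I}$, then $k_1\in i_1\star k_0$ forces the left cell of $k_1$ to be $\geq_L\mathcal{L}^*$, and the maximality of $\mathcal{L}^*$ in $\Sigma$ forces this left cell to be exactly $\mathcal{L}^*$, so $(i_1,k_0,k_1)$ is the desired triple with $k_0\sim_L k_1$. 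The main obstacle is the existence of such $i_1$: supposing for contradiction that every $a_i\cdot a_{k_0}$ with $i\in\mathcal{I}$ has no component in $\mathcal{I}$ (i.e.\ lies entirely in the span of basis elements strictly $>_J\mathcal{I}$), I would adapt the positivity argument from the end of the proof of Proposition~\ref{prop1-1}, using the non-zero element $a_{i_0}\cdot a_{j_0}$ (which has $a_{k_0}$ as one of its positive-coefficient summands) together with the two-sided cell structure of $\mathcal{I}$ to derive the required contradiction.
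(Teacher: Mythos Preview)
Your overall strategy---reduce to finding an idempotent triple $(i,j,k)\in\mathcal{I}^3$ with $j\sim_L k$---is correct and equivalent to what is needed. The gap is in the final step: the existence of $i_1\in\mathcal{I}$ with $i_1\star k_0\cap\mathcal{I}\neq\varnothing$ is not established, and the appeal to the positivity argument of Proposition~\ref{prop1-1} does not fill it. That argument works inside a cell module $C_{\mathcal{L}}$ under the hypothesis that $\mathcal{I}\in\mathcal{X}_{\mathcal{L}}$ (some $a_i$ with $i\in\mathcal{I}$ acts nontrivially on $C_{\mathcal{L}}$); here, with $\mathcal{L}=\mathcal{L}^*$, that hypothesis is precisely what you are trying to prove. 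Concretely, your contradiction hypothesis says $a_i a_{k_0}$ has no $\mathcal{I}$-component for all $i\in\mathcal{I}$; the datum $a_{i_0}a_{j_0}=\gamma a_{k_0}+\cdots$ only lets you analyse $a_i a_{i_0} a_{j_0}$, whose $\mathcal{I}$-components can perfectly well come from the other summands, so no contradiction follows. Maximality of $\mathcal{L}^*$ in $\Sigma$ controls where outputs of triples can land \emph{once they exist}, but gives no leverage for producing an output from $k_0$ in the first place.

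The paper's proof avoids this obstacle by two moves you do not make. First, it passes to the quotient by the span of all $a_i$ with $i>_J\mathcal{I}$, so that without loss of generality $\mathcal{I}$ is $\leq_J$-maximal; this forces $\mathcal{I}\star\mathcal{I}=\mathcal{I}$ (not merely nonempty intersection). Second, it chooses a $\leq_L$-\emph{minimal} left cell $\mathcal{L}\subset\mathcal{I}$ rather than a maximal one. Then the decomposition
\[
\mathcal{L}\subset\mathcal{I}=\mathcal{I}\star\mathcal{I}
=(\mathcal{I}\star\mathcal{L})\cup\bigl(\mathcal{I}\star(\mathcal{I}\setminus\mathcal{L})\bigr),
\]
together with the observation that minimality of $\mathcal{L}$ forbids $\mathcal{L}\cap\bigl(\mathcal{I}\star(\mathcal{I}\setminus\mathcal{L})\bigr)\neq\varnothing$, yields $\mathcal{L}\subset\mathcal{I}\star\mathcal{L}$ directly. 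In your language this produces, for every $k\in\mathcal{L}$, a triple $(i,j,k)$ with $j\in\mathcal{L}$, hence $j\sim_L k$. Note that if you add the WLOG reduction (so that $\Sigma$ becomes the set of \emph{all} left cells of $\mathcal{I}$) and then switch ``maximal'' to ``minimal'' in your choice of $\mathcal{L}^*$, the triple $(i_0,j_0,k_0)$ itself already has $j_0\sim_L k_0$: indeed $j_0\leq_L k_0$, the left cell of $j_0$ lies in $\Sigma$, and minimality forces equality. So the fix collapses your argument into the paper's.
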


\begin{proof}
Without loss of generality we may assume that $\mathcal{I}$ is maximal with respect to $\leq_J$
since all $a_i$ with $i>_J \mathcal{I}$ annihilate $C_{\mathcal{L}}$ by definition.
As $\mathcal{I}$ is idempotent, the set $\mathcal{I}\star \mathcal{I}$ is non-empty and hence 
coincides with $\mathcal{I}$, because of the maximality of the latter. Let 
$\mathcal{L}$ be a left cell in $\mathcal{I}$ which is minimal with respect to 
$\leq_L$. Then $\mathcal{L}\subset \mathcal{I}\star \mathcal{I}$. We have 
\begin{displaymath}
\mathcal{L}\subset \mathcal{I}\star \mathcal{I}= 
\mathcal{I}\star\big(\mathcal{L}\cup (\mathcal{I}\setminus\mathcal{L})\big)=
(\mathcal{I}\star\mathcal{L})\cup
(\mathcal{I}\star (\mathcal{I}\setminus\mathcal{L})).
\end{displaymath}
Because of the minimality of $\mathcal{L}$, it cannot have common elements with 
$\mathcal{I}\star(\mathcal{I}\setminus\mathcal{L})$. Therefore
$\mathcal{L}\subset \mathcal{I}\star\mathcal{L}$ which implies 
$\mathcal{J}(\mathcal{L})=\mathcal{I}$. Now the claim 
of our proposition follows from Proposition~\ref{prop31}.
\end{proof}

\section{Transitive modules and classification of special modules}\label{s9}

\subsection{Positively based modules}\label{s9.1}

Let $(A,\mathbf{B})$ be a positively based algebra and $(V,\mathbf{v})$ a based $A$-module. 
We will say that $(V,\mathbf{v})$ is {\em positively based} provided that, for any $a_i\in\mathbf{B}$
and any $v_s\in \mathbf{v}$, the element $a_i\cdot v_s\in V$ is a linear combination of elements in 
$\mathbf{v}$ with non-negative real coefficients. For example, the left regular $A$-module
${}_AA$ is positively based with respect to the basis $\mathbf{B}$.

For $v_s,v_t\in \mathbf{v}$, we write $v_s\boldsymbol{\to}v_t$ provided that there is $a_i\in \mathbf{B}$
such that the coefficient at $v_t$ in $a_i\cdot v_s$ is non-zero. The relation $\boldsymbol{\to}$ is,
clearly, reflexive and transitive. A based $A$-module $(V,\mathbf{v})$ will be called {\em transitive}
provided that $\boldsymbol{\to}$ is the full relation. For example, each $C_{\mathcal{L}}$, where
$\mathcal{L}$ is a left cell, is a transitive $A$-module with respect to the basis  $\mathbf{B}_{\mathcal{L}}$.

An interesting question seems to be how to decide whether a given $A$-module has a basis which makes
this module into a transitive module. 

\subsection{Special modules for transitive representations}\label{s9.2}

Let $(V,\mathbf{v})$ be a transitive $A$-module. Then, for every $\mathbf{c}\in(\mathbb{R}_{>0}\cap \Bbbk)^n$,
the corresponding element $a(\mathbf{c})$ from \eqref{eq9} is a Perron-Frobenius element for $(V,\mathbf{v})$. 
Therefore we can define the corresponding special subquotient $L(V,\mathbf{v},\mathbf{c})$. Exactly the
same argument as in the proof of Theorem~\ref{thm5} shows that $L(V,\mathbf{v},\mathbf{c})$ is independent
of $\mathbf{c}$. Hence we may denote $L(V,\mathbf{v},\mathbf{c})$ simply by $L_{(V,\mathbf{v})}$.

Similarly to Section~\ref{s8}, we can define the notion of the {\em apex} for any transitive 
$A$-module and appropriate versions of Propositions~\ref{prop1-1} and \ref{prop51} and also of 
Corollary~\ref{cor32} remain true for any transitive $A$-module.

\subsection{Idempotents related to the apex}\label{s9.3}

Let $\mathcal{L}$ be a left cell and $\mathcal{J}:=\mathcal{J}(\mathcal{L})$. 
Then $\mathcal{J}$ is an idempotent two-sided cell in $(\mathbf{n},\star)$.
Let $I_{\mathcal{J}}$ be the linear span in $A$ of all $a_i$ such that $i\not\leq_J \mathcal{J}$. 
Then $I_{\mathcal{J}}$ is an ideal in $A$. The quotient  $A_{\mathcal{J}}:=A/I_{\mathcal{J}}$ is
positively based with respect to the basis $\mathbf{B}_{\mathcal{J}}$ consisting of images  
$\overline{a}_j$ in $A_{\mathcal{J}}$ of all the $a_j$ for which $j\leq_J \mathcal{J}$. 
Denote by $\mathbf{I}$ the two-sided ideal of $A_{\mathcal{J}}$ spanned by 
$\overline{a}_j$, where $j\in \mathcal{J}$.

\begin{proposition}\label{prop55}
{\hspace{2mm}}

\begin{enumerate}[$($i$)$]
\item\label{prop55.1} There is an idempotent $e\in A_{\mathcal{J}}$ which can be written 
as a linear combination of $\overline{a}_j$, for $j\in\mathcal{J}$, with positive real coefficients. 
\item\label{prop55.2}
The idempotent  $e$ is primitive (for $A_{\mathcal{J}}$).
\item\label{prop55.3} 
The simple top of $A_{\mathcal{J}}e$ is isomorphic to $L_{\mathcal{L}}$.
\end{enumerate}
\end{proposition}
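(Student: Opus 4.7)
The strategy is to construct one element $e$ realising all three conclusions at once, taking $e$ to be a Perron--Frobenius spectral projection inside the ideal $\mathbf{I}$.

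Set $b := \sum_{j \in \mathcal{J}} \overline{a}_j \in \mathbf{I}$. Since $I_{\mathcal{J}}$ has been quotiented out, for any $i, j \in \mathcal{J}$ the product $\overline{a}_i \overline{a}_j$ is a non-negative combination of $\{\overline{a}_k : k \in \mathcal{J}\}$, so each power $b^n$ ($n \geq 1$) lies in $\mathbf{I}$ with non-negative coefficients in this basis. On each cell module $C_{\mathcal{L}'}$ with $\mathcal{L}' \subseteq \mathcal{J}$, left multiplication by $b$ is a positive matrix (Lemma~\ref{lem4}) with Perron--Frobenius eigenvalue $\lambda_{\mathcal{L}'}$; repeating the argument of Theorem~\ref{thm7} (the intertwining map $\varphi$ sends a Perron--Frobenius eigenvector to a non-negative combination of basis vectors in $C_{\mathcal{L}''}$, which by Theorem~\ref{thm2}(v) forces equality) shows that $\lambda_{\mathcal{L}'} = \lambda$ is independent of $\mathcal{L}' \subseteq \mathcal{J}$. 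Since $\mathbf{I}$ is filtered as a left $A_{\mathcal{J}}$-module by the cell modules $C_{\mathcal{L}'}$ for $\mathcal{L}' \subseteq \mathcal{J}$, the matrix of left multiplication by $b$ on $\mathbf{I}$ is block triangular, with every diagonal block having $\lambda$ as a simple eigenvalue of strictly maximal modulus.

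For (i), I would take $e \in \mathbf{I}$ to be the spectral projection of left multiplication by $b$ on $\mathbf{I}$ onto its $\lambda$-generalised eigenspace; this can be realised as a (possibly iterated) Cesaro limit of $b^n/\lambda^n$, with order chosen large enough to suppress any Jordan-block behaviour at $\lambda$. By construction $e = \sum_{k \in \mathcal{J}} \mu_k \overline{a}_k$ with $\mu_k \geq 0$; the telescoping identities $be = \lambda e = eb$ give $e^2 = e$ by continuity of multiplication; and the action of $e$ on $C_{\mathcal{L}}$ is the rank-one positive operator $v\hat{v}^t$ from Theorem~\ref{thm2}(vi), so $e \neq 0$. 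For strict positivity $\mu_k > 0$ I would exploit the relations $\sum_{k, j \in \mathcal{J}} \mu_k \gamma_{k,j}^{(l)} = \lambda \mu_l$ (extracted from $eb = \lambda e$) together with their left analogues: they force the support $X := \{k : \mu_k > 0\}$ to be stable under multiplication by $\mathcal{J}$ on both sides (within $\mathcal{J}$), and the two-sided connectivity of $\mathcal{J}$ then yields $X = \mathcal{J}$.

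For (ii) and (iii), I analyse the action of $e$ on simple $A_{\mathcal{J}}$-modules. If $\mathbf{I} \cdot S = 0$ for a simple $S$, then $eS = 0$ trivially. If $\mathbf{I} \cdot S \neq 0$, then $S$ is a quotient of $\mathbf{I}$, hence a composition factor of some $C_{\mathcal{L}''}$ with $\mathcal{L}'' \subseteq \mathcal{J}$. On such a $C_{\mathcal{L}''}$, the image $eC_{\mathcal{L}''}$ is the line $\Bbbk v''$ inside $V_{\mathcal{L}''}$, which by Proposition~\ref{prop51} avoids $K_{\mathcal{L}''}$; hence $eK_{\mathcal{L}''} \subseteq K_{\mathcal{L}''} \cap \Bbbk v'' = 0$, and the image of $e$ in $C_{\mathcal{L}''}/V_{\mathcal{L}''}$ is zero. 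Thus $e$ annihilates every composition factor of $C_{\mathcal{L}''}$ except the unique copy of $L_{\mathcal{L}} \cong V_{\mathcal{L}''}/K_{\mathcal{L}''}$, on which it acts as a rank-one projection. It follows that $eS = 0$ for every simple $S \not\cong L_{\mathcal{L}}$, so in the semisimple quotient $A_{\mathcal{J}}/\mathrm{rad}(A_{\mathcal{J}})$ the element $e$ sits inside the single matrix block indexed by $L_{\mathcal{L}}$ and is of rank one there, hence primitive: this gives (ii). Statement (iii) then follows from the standard identification of the simple top of an indecomposable projective: the top of $A_{\mathcal{J}}e$ is the unique simple $L$ with $eL \neq 0$, which is $L_{\mathcal{L}}$.

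\textbf{Main obstacle.} The principal technical difficulty is the strict positivity of all coefficients $\mu_k$ in part~(i). Non-negativity is automatic from the spectral/Cesaro construction, but upgrading to strict positivity requires propagating an initial non-zero coefficient throughout the whole two-sided cell $\mathcal{J}$ using only multiplications by elements of $\mathcal{J}$. The subtlety is that a $\sim_J$ zigzag witnessing the connectedness of $\mathcal{J}$ may a priori involve basis elements lying strictly below $\mathcal{J}$, so a finer argument is needed to carry out the propagation entirely inside $\mathcal{J}$.
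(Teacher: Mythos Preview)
Your overall strategy---constructing $e$ as a limit of normalized powers of $b = \sum_{j \in \mathcal{J}} \overline{a}_j$ and then reading off (ii) and (iii) from its action on cell modules---matches the paper's. The paper also proves that $b$ acts by a positive matrix on each $C_{\mathcal{L}'}$ with $\mathcal{L}' \subset \mathcal{J}$ (this is not Lemma~\ref{lem4}, which concerns $a(\mathbf{c})$, but is argued directly using that $\mathcal{J}$ is the apex and $\mathbf{I}$ is a two-sided ideal). However, your execution has two real gaps, and the paper treats both differently.

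\emph{Construction of $e$.} You take $e$ to be ``the spectral projection of left multiplication by $b$ on $\mathbf{I}$'', realised as an iterated Ces\`aro limit, and then assert $be = \lambda e = eb$. But the matrix of $b$ on $\mathbf{I}$ is only block upper-triangular, and at this stage you do not know that the off-diagonal blocks vanish (that is precisely the content of Corollary~\ref{cornn53}, which is a \emph{consequence} of Proposition~\ref{prop55}). Thus there may be non-trivial Jordan blocks at $\lambda$; in that case the spectral projection $P$ satisfies $P^2 = P$ but not $bP = \lambda P$, and even if an element $e \in \mathbf{I}$ with $L_e = P$ exists, the identity $e^2 = e$ does not follow from a Ces\`aro construction (Ces\`aro limits do not commute with products). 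The paper avoids this entirely by taking the ordinary limit $a^m/\lambda^m$ on a \emph{single} cell module $C_{\mathcal{L}}$, where the matrix is strictly positive and Theorem~\ref{thm2}\eqref{thm2.6} gives convergence directly; from this it deduces that the coefficients $c_{i,m}$ themselves converge, so $e := \lim a^m/\lambda^m$ exists in $A_{\mathcal{J}}$ and $e^2 = e$ follows at once from convergence of the subsequence $a^{2m}/\lambda^{2m}$.

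\emph{Strict positivity.} Your propagation argument shows the support $X$ of $e$ satisfies $(\mathcal{J} \star X) \cap \mathcal{J} \subseteq X$ and $(X \star \mathcal{J}) \cap \mathcal{J} \subseteq X$, and you correctly identify the obstacle: the $\sim_J$-connectivity of $\mathcal{J}$ uses multipliers from all of $\mathbf{n}$, so this closure property alone does not force $X = \mathcal{J}$. The paper's resolution is an external ingredient you are missing: a theorem of Flor \cite{Fl} stating that a non-negative idempotent matrix in block upper-triangular form must have all off-diagonal blocks equal to zero. Applied to the matrix $N$ of $e$ acting on $\mathbf{I}$ (whose diagonal blocks are positive, being limits of $[a]^m/\lambda^m$ on each $C_{\mathcal{L}_p}$), this makes $N$ block diagonal, and then Perron--Frobenius uniqueness (Theorem~\ref{thm2}\eqref{thm2.5}) applied blockwise forces the coefficient vector of $e$---a non-negative $1$-eigenvector of $N$---to be strictly positive.

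Your treatment of (ii) and (iii), via the action of $e$ on all simple $A_{\mathcal{J}}$-modules, is a reasonable variant of the paper's more direct argument (simplicity of the eigenvalue $1$ on $C_{\mathcal{L}}$ for primitivity, and Proposition~\ref{prop51} with Corollary~\ref{cor32} for the identification of the top).
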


\begin{proof}
Consider the $A$-module $C_{\mathcal{L}}$. As $I_{\mathcal{J}}\cdot C_{\mathcal{L}}=0$, 
the $A$-module $C_{\mathcal{L}}$ is also, naturally, an $A_{\mathcal{J}}$-module. Let $a$ be the sum of
all $\overline{a}_j$, where $j\in\mathcal{J}$. Let  $[a]$ be the matrix of the action of 
$a$ on $C_{\mathcal{L}}$ in the basis $\mathbf{B}_{\mathcal{L}}$.

We claim that all entries in $[a]$ are positive.
First of all, we claim that all columns in $[a]$ are non-zero. Indeed, if $[a]$ has a zero column, then
there is $i\in\mathcal{L}$ such that $\overline{a}_ja_i=0$, for all $j\in \mathcal{J}$. This means
that $\mathbf{I}a_i=0$ and hence $\mathbf{I}A_{\mathcal{J}} a_i=0$ as $\mathbf{I}$ is a two-sided ideal
in $A_{\mathcal{J}}$. However, each $a_s$, where $s\in\mathcal{L}$, appears with a non-zero coefficient
in $u a_i$, for some $u\in A_{\mathcal{J}}$, by transitivity of $C_{\mathcal{L}}$. Therefore we
must have  $\mathbf{I}C_{\mathcal{L}}=0$ which contradicts $\mathcal{J}=\mathcal{J}(\mathcal{L})$.
This shows that each  column in $[a]$ is non-zero.

Next we claim that all entries in every column in $[a]$
are non-zero. Consider the column corresponding to $a_j$, for some $j\in\mathcal{L}$.
Then $aa_j\neq 0$ by the previous paragraph. Let 
$X\subset \mathcal{L}$ be the set of all those ${a}_i$ which appear in $a{a}_j$ with non-zero coefficients.
On the one hand, $X$ is non-empty. On the other hand, the fact that $\mathbf{I}$
is a two-sided ideal in $A_{\mathcal{J}}$,
implies that the linear span of $X$ is $A_{\mathcal{J}}$-invariant.
From the transitivity of $C_{\mathcal{L}}$, it thus follows that 
$X$ contains all  ${a}_i$, where $i\in\mathcal{L}$. Therefore all entires in  $[a]$ are positive.
Let $\lambda$ be the Perron-Frobenius eigenvalue of $[a]$.

Let us now assume that $\Bbbk=\mathbb{R}$, all other cases can be dealt with by restriction and extension
of scalars. Consider the matrix
\begin{displaymath}
M:=\lim_{m\to\infty}\frac{[a]^m}{\lambda^{m}}.
\end{displaymath}
From Theorem~\ref{thm2}\eqref{thm2.6}, it follows that $M$ is positive and $M^2=M$.  

For $m\geq 1$, consider the element
\begin{displaymath}
\frac{a^m}{\lambda^{m}}=\sum_{i\in\mathcal{J}} c_{i,m} \overline{a}_i,
\end{displaymath}
where $c_{i,m}\in\mathbb{R}_{> 0}$. As the matrix of the action of each $\overline{a}_i$
on $C_{\mathcal{L}}$ is non-zero and has non-negative entries, from the existence of the limit $M$ it follows that,
for each $i\in\mathcal{J}$, the sequence $\{c_{i,m}\,:\,m\geq 1\}$ converges, say to some
$c_i\in \mathbb{R}_{\geq 0}$. Let 
\begin{displaymath}
e=\sum_{i\in\mathcal{J}} c_{i} \overline{a}_i.
\end{displaymath}
Then $e^2=e$ and $M$ is the matrix of the action of $e$ on $C_{\mathcal{L}}$.

Since $1$ is a simple eigenvalue for $e$ by the Perron-Frobenius Theorem,
the left projective $A_{\mathcal{J}}$-module $A_{\mathcal{J}}e$ has simple top (cf. Subsection~\ref{s7.1}
and the corresponding property for $V_{\mathcal{L}}$). This means that $e$ is primitive,
proving claim~\eqref{prop55.2}.

To prove claim~\eqref{prop55.3}, it is enough to show that $e$ does not annihilate $L_{\mathcal{L}}$.
This follows by combining Proposition~\ref{prop51} with Corollary~\ref{cor32}.

To prove claim~\eqref{prop55.1}, it remains to show that all $c_i$ are non-zero.
Let $\mathcal{L}_1,\mathcal{L}_2,\dots,\mathcal{L}_k$ be an ordering of the left cells in 
$\mathcal{J}$ such that $\mathcal{L}_i\geq_L \mathcal{L}_j$ implies $i\leq j$, for all $i,j$.
Let $N$ be the matrix of the action of $e$ on $\mathbf{I}$ in the basis $\overline{a}_i$, where $i\in\mathcal{J}$,
which is ordered respecting the ordering of the $\mathcal{L}_p$'s. Then, combining
Propositions~\ref{prop31}, \ref{propnn23} and the arguments in the first part of the proof, we see that 
the matrix $N$ has the upper-triangular block form
\begin{displaymath}
\left(
\begin{array}{ccccc}
N_1 & * & * &\dots &*\\
0 & N_2 & * &\dots &*\\
0 & 0 & N_3 &\dots &*\\
\vdots & \vdots & \vdots &\ddots &\vdots\\
0 & 0 & 0 &\dots &N_k
\end{array}
\right),
\end{displaymath}
where each $N_p$ is a  positive matrix. 
As $N$ is idempotent, from \cite[Theorem~2]{Fl} it follows
that all off-diagonal blocks of $N$ are zero. Therefore $N$ is a direct sum of 
positive idempotent matrices. From Theorem~\ref{thm2}\eqref{thm2.4} and \eqref{thm2.5}
it follows that the only (up to scalar) non-negative eigenvector of $N$ has positive coefficients.
This means that all $c_i$ are positive and completes the proof.
\end{proof}

For a two-sided cell $\mathcal{J}$, let $X$ denote the $\Bbbk$-span in $A$ of all 
$a_i$, where $i\geq_J \mathcal{J}$ and $Y$ denote the $\Bbbk$-span in $A$ of all 
$a_i$, where $i>_J \mathcal{J}$. Set $M(\mathcal{J})=X/Y$, which is, naturally, a 
left $A$-module.

The following result follows from the proof of Proposition~\ref{prop55}.
It is interesting because it, in combination with Proposition~\ref{propnn23}, 
in particular, provides an elementary explanation for the corresponding phenomenon 
for Hecke algebras (where all two-sided cells are idempotent), see \cite[(5.1.13)]{Lu5}
and the remark after it.

\begin{corollary}\label{cornn53}
Let $\mathcal{I}$ be a two-sided cell and $\mathcal{J}=\mathcal{J}(\mathcal{I})$. 
Assume that $M(\mathcal{I})$ is an $A_{\mathcal{J}}$-module. Then all left cells in
$\mathcal{I}$ are not comparable with respect to the left order.
\end{corollary}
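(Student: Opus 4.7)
The plan is to apply the matrix analysis from the proof of Proposition~\ref{prop55} directly to the module $M(\mathcal{I})$ and then to perform a small combinatorial upgrade. Suppose, for contradiction, that there exist distinct left cells $\mathcal{L}_p, \mathcal{L}_q \subset \mathcal{I}$ with $\mathcal{L}_p >_L \mathcal{L}_q$. Fix a linear order $\mathcal{L}_1,\ldots,\mathcal{L}_k$ of the left cells in $\mathcal{I}$ such that $\mathcal{L}_a \geq_L \mathcal{L}_b$ implies $a \leq b$, and order the basis $\{\bar{a}_i\,:\,i \in \mathcal{I}\}$ of $M(\mathcal{I})$ block-wise accordingly. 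Since for every $s \in \mathbf{n}$ and $i \in \mathcal{I}$ the product $a_s a_i$ involves only $a_k$ with $k \geq_L i$, the matrix of any element of $A_{\mathcal{J}}$ on $M(\mathcal{I})$ is upper-triangular block, with the $(p,p)$-diagonal block equal to the matrix of that element on the cell module $C_{\mathcal{L}_p}$.

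Take the primitive idempotent $e \in A_{\mathcal{J}}$ from Proposition~\ref{prop55}, and write $e = \sum_{s \in \mathcal{J}} c_s \bar{a}_s$ with all $c_s > 0$. Since all $\mathcal{L}_p \subset \mathcal{I}$ share the apex $\mathcal{J}$ by Proposition~\ref{prop31}, repeating the argument from the first half of the proof of Proposition~\ref{prop55} shows that the matrix of $\sum_{s \in \mathcal{J}} \bar{a}_s$ on each $C_{\mathcal{L}_p}$ has strictly positive entries, and consequently so does the matrix of $e$. Hence the matrix $[e]$ of $e$ on $M(\mathcal{I})$ is upper-triangular block with strictly positive diagonal blocks, and it is idempotent because $M(\mathcal{I})$ is an $A_{\mathcal{J}}$-module by assumption. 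Flor's theorem \cite[Theorem~2]{Fl} then forces all off-diagonal blocks of $[e]$ to vanish; in particular, vanishing of the $(p,q)$-block gives $\sum_{s \in \mathcal{J}} c_s \gamma_{s,i}^{(j)} = 0$, and positivity of the $c_s$ together with non-negativity of the $\gamma_{s,i}^{(j)}$ forces $\gamma_{s,i}^{(j)} = 0$ for every $s \in \mathcal{J}$, $i \in \mathcal{L}_q$, $j \in \mathcal{L}_p$.

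To derive a contradiction, I upgrade an arbitrary witness of $\mathcal{L}_p \geq_L \mathcal{L}_q$ to one inside $\mathcal{J}$. Pick $s \in \mathbf{n}$, $j_0 \in \mathcal{L}_p$, $i_0 \in \mathcal{L}_q$ with $\gamma_{s,i_0}^{(j_0)} > 0$, and use positivity of the matrix of $\sum_{s' \in \mathcal{J}} \bar{a}_{s'}$ on $C_{\mathcal{L}_p}$ to find $t \in \mathcal{J}$ with $\gamma_{t,j_0}^{(j_0)} > 0$. Expanding $a_t(a_s a_{i_0})$ via $a_s a_{i_0} = \sum_k \gamma_{s,i_0}^{(k)} a_k$, the $k=j_0$ summand alone contributes $\gamma_{s,i_0}^{(j_0)} \gamma_{t,j_0}^{(j_0)} > 0$ to the coefficient of $a_{j_0}$, so this coefficient is strictly positive. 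Writing instead $a_t a_s = \sum_r \gamma_{t,s}^{(r)} a_r$, the same coefficient equals $\sum_r \gamma_{t,s}^{(r)} \gamma_{r,i_0}^{(j_0)}$, so some $r \in \mathbf{n}$ satisfies $\gamma_{t,s}^{(r)} > 0$ and $\gamma_{r,i_0}^{(j_0)} > 0$. Any such $r$ satisfies $r \geq_J t \in \mathcal{J}$; if moreover $r \not\leq_J \mathcal{J}$, then $a_r$ annihilates $M(\mathcal{I})$ by hypothesis, forcing $\gamma_{r,i_0}^{(j_0)} = 0$ (since $j_0 \in \mathcal{I}$), a contradiction. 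Hence $r \in \mathcal{J}$, producing $r \in \mathcal{J}$, $i_0 \in \mathcal{L}_q$, $j_0 \in \mathcal{L}_p$ with $\gamma_{r,i_0}^{(j_0)} > 0$, directly contradicting the conclusion obtained from Flor's theorem. The main obstacle is precisely this final combinatorial upgrade, which requires both the apex-positivity on $C_{\mathcal{L}_p}$ and the $A_{\mathcal{J}}$-module hypothesis on $M(\mathcal{I})$.
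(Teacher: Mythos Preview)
Your proof is correct and follows essentially the same route as the paper's: both use the idempotent $e$ from Proposition~\ref{prop55}, establish that its matrix on $M(\mathcal{I})$ is block upper-triangular with strictly positive diagonal blocks, and then apply Flor's theorem to force the off-diagonal blocks to vanish. Your ``combinatorial upgrade'' --- multiplying a witness $a_s$ on the left by some $a_t$ with $t\in\mathcal{J}$ and using the $A_{\mathcal{J}}$-module hypothesis to trap the resulting factor $a_r$ inside $\mathcal{J}$ --- is exactly the element-level version of the paper's observation that $\mathbf{I}$ is a two-sided ideal of $A_{\mathcal{J}}$, so that $\overline{a}_s u\in\mathbf{I}$ whenever $u\in\mathbf{I}$. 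The only difference is cosmetic: the paper argues directly that each $\mathcal{L}_q$-span is an $A_{\mathcal{J}}$-submodule of $M(\mathcal{I})$, while you argue by contradiction focusing on a single comparable pair $(\mathcal{L}_p,\mathcal{L}_q)$.
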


\begin{proof}
Let $\mathcal{L}_1,\mathcal{L}_2,\dots,\mathcal{L}_k$ be an ordering of the left cells in 
$\mathcal{I}$ such that $\mathcal{L}_i\geq_L \mathcal{L}_j$ implies $i\leq j$, for all $i,j$.
Consider the idempotent $e\in A_{\mathcal{J}}$ constructed in the proof of Proposition~\ref{prop55}. 
As $M(\mathcal{I})$ is an $A_{\mathcal{J}}$-module by assumption, the action of
$e$ on $M(\mathcal{I})$ is well-defined. Let $N$ be the matrix of this action, namely, the matrix 
of multiplicities of $a_i$, where $i\in \mathcal{I}$, in 
$a a_j$, where $j\in \mathcal{I}$, written similarly to the proof of Proposition~\ref{prop55}.
Similarly to the arguments in  the proof of Proposition~\ref{prop55}, $N$ is an idempotent
non-negative upper-triangular matrix with positive diagonal blocks and hence 
it must be a direct sum of positive idempotent matrices by \cite[Theorem~2]{Fl}.

Fix $q\in\{1,2,\dots,k\}$. From the diagonal form of $N$, we have that,
for any  $i\in\mathcal{L}_q$, all 
${a}_j$, where $j\in\mathcal{L}_q$,  appear with non-zero coefficients in 
elements of the form $u{a}_i$, where $u\in\mathbf{I}$. Moreover, no other 
${a}_s$ appear in this way.

Let now $j\in\mathcal{L}_q$ and ${a}_s$ be arbitrary. If
some ${a}_t$ appears with a non-zero coefficient in $\overline{a}_s{a}_j$,
it also appears with a non-zero coefficient in $\overline{a}_su{a}_i$, where
$i$ is as in the previous paragraph and for some
$u\in \mathbf{I}$. As $\mathbf{I}$ is a two-sided
ideal of $A_{\mathcal{J}}$, from the previous paragraph it follows that $t\in \mathcal{L}_q$.
The claim follows.
\end{proof}

In \cite[Subsection~4.3]{CM}, a two-sided cell $\mathcal{J}$ is called {\em good} provided that there 
is a linear combination $a$ of all $\overline{a}_j$, where $j\in\mathcal{J}$, with 
positive real coefficients, such that  
\begin{displaymath}
a^n+v_{n-1}a^{n-1}+\dots +v_{k+1}a^{k+1}=v_k a^k+v_{k-1} a^{k-1}+\dots+ v_l a^l
\end{displaymath}
for some $n,k,l\in\{1,2,\dots\}$ and some 
non-negative real numbers $v_{n-1}$, $v_{n-2},\dots$, $v_l$ such that $v_l\neq 0$.

\begin{corollary}\label{cornn54}
Let $\mathcal{L}$ be a left cell and $\mathcal{J}=\mathcal{J}(\mathcal{L})$.
Then $\mathcal{J}$ is good.
\end{corollary}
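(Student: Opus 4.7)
The plan is to take for $a$ the idempotent $e$ produced by Proposition~\ref{prop55}. By Proposition~\ref{prop55}(i), $e \in A_{\mathcal{J}}$ can be written as a linear combination $\sum_{j \in \mathcal{J}} c_j \overline{a}_j$ with every $c_j > 0$, so the choice $a := e$ satisfies the positivity requirement in the definition of a good cell.

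The required polynomial relation is then the idempotent equation itself. Since $e^2 = e$, we have $a^2 = a$, which fits the form in the definition of goodness with $n = 2$, $k = l = 1$, and $v_1 = 1$: the summation $v_{n-1}a^{n-1}+\dots+v_{k+1}a^{k+1}$ on the left hand side is empty because $n-1 = 1 < 2 = k+1$, so the left hand side reduces to $a^n = a^2$; the right hand side is just $v_l a^l = 1\cdot a$. All $v_j$ are non-negative, $v_l = 1 \neq 0$, and $n,k,l \in \{1,2,\dots\}$, so all conditions in the definition are verified.

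Hence no further argument is needed: Corollary~\ref{cornn54} is a direct consequence of Proposition~\ref{prop55}. The only substantive work — the construction of the positive idempotent $e$ via the Perron-Frobenius limit of powers of $\sum_{j \in \mathcal{J}} \overline{a}_j$ — has already been carried out in the proof of Proposition~\ref{prop55}, so no additional obstacle remains.
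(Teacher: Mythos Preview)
Your proof is correct and follows exactly the same approach as the paper: both take $a = e$ from Proposition~\ref{prop55} and use the idempotent relation $e^2 = e$ to verify the definition of goodness. The paper's proof is simply the one-line version of what you wrote out in detail.
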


\begin{proof}
We can take $a=e$ which satisfies $a^2=a$.
\end{proof}

\subsection{Classification of special modules}\label{s9.4}

Now we are ready to classify all special modules appearing in all transitive modules.

\begin{theorem}\label{thmclass}
Let $(V,\mathbf{v})$ be a transitive $A$-module with apex $\mathcal{J}$. Then 
$L_{(V,\mathbf{v})}\cong L_{\mathcal{L}}$, for any left cell $\mathcal{L}$ in $\mathcal{J}$.
\end{theorem}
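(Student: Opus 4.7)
The plan is to compare $L_{(V,\mathbf{v})}$ with $L_{\mathcal{L}}$ by applying the primitive idempotent $e\in A_{\mathcal{J}}$ constructed in Proposition~\ref{prop55}. Since $(V,\mathbf{v})$ has apex $\mathcal{J}$, the transitive analog of Proposition~\ref{prop1-1} promised in Subsection~\ref{s9.2} implies that $I_{\mathcal{J}}$ annihilates $V$, so $V$ is naturally an $A_{\mathcal{J}}$-module. By Proposition~\ref{prop55}(ii) and (iii), $e$ is primitive in $A_{\mathcal{J}}$ and $A_{\mathcal{J}}e$ is indecomposable projective with simple top $L_{\mathcal{L}}$. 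It therefore suffices to establish $eL_{(V,\mathbf{v})}\neq 0$: a non-zero element of $eL_{(V,\mathbf{v})}$ corresponds, via the standard adjunction, to a non-zero homomorphism $A_{\mathcal{J}}e\to L_{(V,\mathbf{v})}$, which must factor through the simple top $L_{\mathcal{L}}$ and hence yield $L_{\mathcal{L}}\cong L_{(V,\mathbf{v})}$.

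To show $eL_{(V,\mathbf{v})}\neq 0$, I would fix some $\mathbf{c}\in(\mathbb{R}_{>0}\cap\Bbbk)^n$ and take a Perron-Frobenius eigenvector $v\in V$ for the action of $a(\mathbf{c})$; by Theorem~\ref{thm2}\eqref{thm2.4}, $v=\sum_t d_t v_t$ with all $d_t>0$. Writing $e=\sum_{i\in\mathcal{J}}c_i\overline{a}_i$ with all $c_i>0$ (Proposition~\ref{prop55}(i)), each $a_iv$ is a non-negative linear combination of elements of $\mathbf{v}$. For $i\in\mathcal{J}$ we have $a_i\cdot V\neq 0$ by the apex condition, so some $a_iv_t$ is non-zero; positivity of the basis and of the $d_t$ then prevents cancellation in $a_iv=\sum_t d_ta_iv_t$, giving $a_iv\neq 0$. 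Consequently $ev$ is a non-zero, non-negative linear combination of elements of $\mathbf{v}$.

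The next step is to invoke the transitive analog of Lemma~\ref{lem52} to identify $V_{(V,\mathbf{v})}=Av$; in particular $ev\in V_{(V,\mathbf{v})}$, which surjects onto $L_{(V,\mathbf{v})}$ with kernel $K_{(V,\mathbf{v})}$. The transitive analog of Proposition~\ref{prop51} guarantees that $K_{(V,\mathbf{v})}$ contains no non-zero, non-negative linear combination of elements of $\mathbf{v}$, so the image of $ev$ in $L_{(V,\mathbf{v})}$ is non-zero. Since this image is $e\cdot\bar v$ where $\bar v$ is the image of $v$ in $L_{(V,\mathbf{v})}$, we conclude $eL_{(V,\mathbf{v})}\neq 0$ as required.

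The main obstacle I anticipate is not any single step but rather verifying that the machinery of Sections~\ref{s7} and~\ref{s8} transfers cleanly to the transitive setting; the essential inputs are (a) the transitive analog of Proposition~\ref{prop1-1}(iii), so that no $a_i$ with $i\in\mathcal{J}$ annihilates $V$, and (b) the transitive analog of Proposition~\ref{prop51}. Both are promised by Subsection~\ref{s9.2}, and neither uses anything beyond positivity of $\mathbf{B}$, positivity of the action on $\mathbf{v}$, and transitivity via $\boldsymbol{\to}$, so the Schauder fixed point argument of Proposition~\ref{prop51} should go through verbatim.
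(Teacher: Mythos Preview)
Your proposal is correct and follows essentially the same approach as the paper: reduce to $A_{\mathcal{J}}$, use the primitive idempotent $e$ of Proposition~\ref{prop55}, and show $eL_{(V,\mathbf{v})}\neq 0$ via the transitive analogs of Proposition~\ref{prop51} and Corollary~\ref{cor32}. The paper's proof is simply a more compressed version of yours, citing those transitive analogs directly rather than unfolding the argument with the Perron--Frobenius eigenvector as you do.
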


\begin{proof}
We may assume $A=A_{\mathcal{J}}$.
Let $\mathcal{L}$ be a left cell in $\mathcal{J}$ which is maximal with respect to $\leq_L$.
Let $e$ be an idempotent given by Proposition~\ref{prop55}. From the transitive versions of
Proposition~\ref{prop51} and  
Corollary~\ref{cor32} it follows that $e$ does not annihilate $L_{(V,\mathbf{v})}$.
As $e$ is primitive by Proposition~\ref{prop55}\eqref{prop55.3}, it follows that 
$L_{(V,\mathbf{v})}\cong L_{\mathcal{L}}$.
\end{proof}

As an immedeate consequence from Theorem~\ref{thmclass}, we have:

\begin{corollary}\label{thmclass-1}
Let $(V,\mathbf{v})$ be a transitive $A$-module with apex $\mathcal{J}$. Then 
$L_{(V,\mathbf{v})}$ does not depend on $\mathbf{v}$.
\end{corollary}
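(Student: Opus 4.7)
The plan is to read the conclusion off Theorem~\ref{thmclass} without any extra work. That theorem asserts $L_{(V,\mathbf{v})} \cong L_{\mathcal{L}}$ for any left cell $\mathcal{L}$ contained in the apex $\mathcal{J}$ of $(V,\mathbf{v})$. The key observation is that the right-hand side $L_{\mathcal{L}}$ is intrinsic to the positively based algebra $(A,\mathbf{B})$: it is a simple subquotient of the cell module $C_{\mathcal{L}}$, whose construction in Subsection~\ref{s3.3} uses only the combinatorics of $\mathbf{n}$ induced by $\mathbf{B}$. In particular $L_{\mathcal{L}}$ has no dependence on the basis $\mathbf{v}$ of $V$.

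So my step-by-step approach would be as follows. First, I would fix two positive bases $\mathbf{v}$ and $\mathbf{v}'$ of $V$ with respect to which $V$ is transitive. Second, I would verify that the apex is the same in both cases: by the transitive version of Proposition~\ref{prop1-1} mentioned in Subsection~\ref{s9.2}, the apex is characterized as the maximum two-sided cell $\mathcal{J}$ such that some $a_i$ with $i\in\mathcal{J}$ acts non-trivially on $V$, and this property is manifestly intrinsic to $V$ as an $A$-module. Third, fixing any left cell $\mathcal{L}\subset\mathcal{J}$, Theorem~\ref{thmclass} applied to both based modules gives $L_{(V,\mathbf{v})}\cong L_{\mathcal{L}}\cong L_{(V,\mathbf{v}')}$.

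There is no real obstacle beyond noting that the apex is already basis-independent; once that is in hand, the corollary is simply a matter of invoking Theorem~\ref{thmclass} twice and observing that its right-hand side involves no reference to $\mathbf{v}$.
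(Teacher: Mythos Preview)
Your proposal is correct and follows essentially the same approach as the paper: the paper's proof simply observes that the apex $\mathcal{J}$ of $(V,\mathbf{v})$ does not depend on $\mathbf{v}$, and hence neither does $L_{(V,\mathbf{v})}\cong L_{\mathcal{L}}$ by Theorem~\ref{thmclass}. Your version is a bit more explicit about why the apex is basis-independent, but the argument is the same.
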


\begin{proof}
The apex $\mathcal{J}$ of $(V,\mathbf{v})$ does not depend on $\mathbf{v}$
and hence neither does the special module $L_{(V,\mathbf{v})}\cong L_{\mathcal{L}}$.
\end{proof}

Therefore, to list all special $A$-modules one has to do the following:
\begin{itemize}
\item identify all idempotent two-sided cells;
\item in each idempotent two-sided cell $\mathcal{J}$ fix a left cell, 
maximal with respect to $\leq_L$ among all left cells in $\mathcal{J}$;
\item compute the corresponding primitive idempotent $e$ for $A_{\mathcal{J}}$;
\item the corresponding special module is $A_{\mathcal{J}}e/\mathrm{Rad}(A_{\mathcal{J}}e)$.
\end{itemize}

Let us call a simple $A$-module {\em special} if it is isomorphic to a special
module for some transitive $A$-module. As an immediate corollary from the above, we have:

\begin{corollary}\label{corclass}
The above defines a one-to-one correspondence between the set of isomorphism classes of
special $A$-modules and the set of idempotent two-sided cells for $A$.
\end{corollary}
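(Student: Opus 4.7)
The plan is to exhibit the inverse bijection explicitly and verify both surjectivity and injectivity using results already established in the paper. Define the candidate map $\Phi$ from idempotent two-sided cells to isomorphism classes of special $A$-modules by $\Phi(\mathcal{I}):=[L_{\mathcal{L}}]$, where $\mathcal{L}$ is any left cell contained in $\mathcal{I}$. This map is well-defined on isomorphism classes by Theorem~\ref{thm7}: the module $L_{\mathcal{L}}$ depends only on the two-sided cell containing $\mathcal{L}$. Moreover, $\Phi(\mathcal{I})$ is a special module in the sense of Subsection~\ref{s9.4}, since $C_{\mathcal{L}}$ is itself transitive (by the remark in Subsection~\ref{s9.1}) and has $L_{\mathcal{L}}$ as its special subquotient.

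For surjectivity, I would start from an arbitrary special module, which by definition is of the form $L_{(V,\mathbf{v})}$ for some transitive $A$-module $(V,\mathbf{v})$. By the transitive version of Proposition~\ref{prop1-1}\eqref{prop1-1.2} (whose validity is asserted in Subsection~\ref{s9.2}), the apex $\mathcal{J}$ of $(V,\mathbf{v})$ is an idempotent two-sided cell. Theorem~\ref{thmclass} then identifies $L_{(V,\mathbf{v})}$ with $L_{\mathcal{L}}$ for any left cell $\mathcal{L}$ in $\mathcal{J}$; hence $[L_{(V,\mathbf{v})}]=\Phi(\mathcal{J})$, so $\Phi$ hits every special module.

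For injectivity, suppose $\mathcal{I},\mathcal{I}'$ are idempotent two-sided cells with $\Phi(\mathcal{I})=\Phi(\mathcal{I}')$, and let $\mathcal{L}\subset\mathcal{I}$, $\mathcal{L}'\subset\mathcal{I}'$ be chosen left cells so that $L_{\mathcal{L}}\cong L_{\mathcal{L}'}$. The key observation is that isomorphic $A$-modules have the same annihilator among the basis elements of $\mathbf{B}$, and by Corollary~\ref{cor32} the set $\{i\in\mathbf{n} : a_i\cdot L_{\mathcal{L}}\neq 0\}$ equals $\{i : i\leq_J \mathcal{J}(\mathcal{L})\}$, which is a downward-closed set in $(\mathbf{n},\leq_J)$ whose unique maximal two-sided cell is $\mathcal{J}(\mathcal{L})$. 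Hence $\mathcal{J}(\mathcal{L})=\mathcal{J}(\mathcal{L}')$. Finally, since $\mathcal{I}$ and $\mathcal{I}'$ are both idempotent, Proposition~\ref{propnn23} gives $\mathcal{J}(\mathcal{L})=\mathcal{I}$ and $\mathcal{J}(\mathcal{L}')=\mathcal{I}'$, whence $\mathcal{I}=\mathcal{I}'$.

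The only step that could cause any trouble is justifying the transitive generalization of Proposition~\ref{prop1-1}\eqref{prop1-1.2} used in surjectivity, but since Subsection~\ref{s9.2} explicitly declares that the analogues of Proposition~\ref{prop1-1} remain valid for transitive modules, no extra work is required. The argument is thus essentially a direct assembly of Theorem~\ref{thm7}, Theorem~\ref{thmclass}, Corollary~\ref{cor32}, and Proposition~\ref{propnn23}.
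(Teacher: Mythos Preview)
Your proof is correct and is precisely the natural argument the paper has in mind; the paper states the corollary as immediate and gives no separate proof, and your assembly of Theorem~\ref{thm7}, Theorem~\ref{thmclass}, Corollary~\ref{cor32}, and Proposition~\ref{propnn23} is exactly the intended justification. The only cosmetic difference is that the paper's ``above'' describes the map via the primitive idempotent $e$ of Proposition~\ref{prop55}, i.e.\ $\mathcal{J}\mapsto A_{\mathcal{J}}e/\mathrm{Rad}(A_{\mathcal{J}}e)$, whereas you write $\mathcal{I}\mapsto L_{\mathcal{L}}$ directly; Proposition~\ref{prop55}\eqref{prop55.3} shows these agree.
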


We do not know whether, for a non-semi-simple $A$, a cell module $C_{\mathcal{L}}$ might contain
$L_{\mathcal{L}'}$ for some left cell $\mathcal{L}'$ such that $\mathcal{J}(\mathcal{L}')\neq 
\mathcal{J}(\mathcal{L})$. Similarly, we do not know whether, for some $A$, a transitive $A$-module $(V,\mathbf{v})$ 
with $L_{(V,\mathbf{v})}\cong L_{\mathcal{L}}$ might contain $L_{\mathcal{L}'}$ such that 
$\mathcal{J}(\mathcal{L}')\neq \mathcal{J}(\mathcal{L})$.

Another interesting question is how to decide whether a given simple module $V$ over a positively 
based algebra  $A$ is special or not.

\noindent
Tobias Kildetoft, Department of Mathematics, Uppsala University,
Box 480,\\ SE-751~06, Uppsala, SWEDEN, {\tt tobias.kildetoft\symbol{64}math.uu.se}

\noindent
Volodymyr Mazorchuk, Department of Mathematics, Uppsala University,
Box 480, SE-751~06, Uppsala, SWEDEN, {\tt mazor\symbol{64}math.uu.se}

\end{document}